\titleformat{\section}[block]{\Large\bfseries\filcenter}{\thesection}{1em}{}
\def\expandafter\normalsize\expandafter{%
\normalsize
\setlength\abovedisplayskip{6pt}
\setlength\belowdisplayskip{6pt}
\setlength\abovedisplayshortskip{6pt}
\setlength\belowdisplayshortskip{6pt}
}
\theoremstyle{plain}
\renewcommand*\thesection{\arabic{section}}
\numberwithin{equation}{section} 
\newtheorem{theorem}{Theorem}[section]
\newtheorem{lemma}[theorem]{Lemma}
\newtheorem*{lemma*}{Lemma}
\newtheorem{proposition}[theorem]{Proposition}
\newtheorem{corollary}[theorem]{Corollary}
\newtheorem{conjecture}[theorem]{Conjecture}
\theoremstyle{definition}
\let\expandafter\oldproof\csname\string\proof\endcsname
\let\oldendproof\endproof
\renewenvironment{proof}[1][\proofname]{%
\oldproof[\upshape \bfseries #1]%
}{\oldendproof}
\def\@makechapterhead#1{%
\vspace*{50\p@}%
{\parindent \z@ \raggedright \normalfont
\interlinepenalty\@M
\Huge\bfseries  \thechapter.\quad #1\par\nobreak
\vskip 40\p@
}}
\newcommand{\paren}[1]{\left({#1}\right)}
\newcommand{\hodge}{{*}}
\DeclareMathOperator{\Mod}{Mod}
\DeclareMathOperator{\cp}{Cap}
\DeclareMathOperator{\M}{M}
\DeclareMathOperator{\dVol}{dVol}
\DeclareMathOperator{\ddiv}{div}
\def \a{\alpha}
\def \R {\mathbb{R}}
\def \Z {\mathbb{Z}}
\def \C{\mathbb{C}}
\def \N{\mathbb{N}}
\def \D{\textup{D}}
\def \e{\varepsilon}
\def \d{\,\textup{d}}
\def \n{\nabla}
\def \exc{\backslash}
\def \p{\partial}
\def \mc{\mathcal}
\def \mb{\mathbb}
\def \ms{\mathscr}
\def \tp{\textup}
\def \id{\textup{id}}
\def \loc{\textup{loc}}
\begin{document}

	\title{\textbf{On the optimal conformal capacity of linked curves}}
	
	\author[1]{{\Large Andr\'e Guerra}}
	\author[2]{{\Large Eden Prywes}}
	
	\affil[1]{\small Institute for Theoretical Studies,  ETH Zürich,  Zürich,  Switzerland
	\protect\\
	{\tt{andre.guerra@eth-its.ethz.ch}} \ }
	
	\affil[2]{\small Department of Mathematics, Princeton University, Princeton, USA
	\protect\\
	{\tt{eprywes@princeton.edu}} \ }
	
	\date{}
	
	\maketitle

	\begin{abstract}
We investigate the following optimization problem: what is the least possible conformal capacity of a  pair of  linked curves in $\mb S^3$? A natural conjecture, due to Gehring, Martin and Palka, is that the optimal value is attained by the standard Hopf link. We prove that this is the case under the assumption that each component of the link lies on a different side of a conformal image of the Clifford torus.  In particular,  this shows that the Hopf link is a local minimizer in a strong sense. 
	\end{abstract}
	
	\unmarkedfntext{
	\hspace{-0.74cm}
	\emph{Acknowledgments.} AG acknowledges the support of Dr. Max R\"ossler, the Walter Haefner Foundation and the ETH Z\"urich Foundation.  EP was partially supported by the NSF grant RTG-DMS-1502424.  EP would also like to thank Mario Bonk for discussions on this paper.
	}
	
	\section{Introduction}
	
	\subsection{Optimal configurations of linked curves}\label{sec:links}	
	
Given two  connected, compact sets $C_0,C_1\subset \mb S^3$, we define their relative \textit{conformal capacity} through
$$\cp_3(C_0,C_1) = \inf_{u} \int_{\mb S^3} |\tp d u|^3 \dVol,$$
where the infimum is taken over all absolutely continuous functions $u\colon \mb S^3\to [0,1]$ with $u=0$ in $C_0$ and $u=1$ in $C_1$, 
and $\mb S^3=\{(z_1,z_2)\in \C^2: |z_1|^2+|z_2|^2=1\}.$ As the name indicates, the quantity $\cp_3(C_0,C_1)$ is \textit{conformally invariant}.
We call $R=\mb S^3\exc (C_0\cup C_1)$ the \textit{condenser} corresponding to $C_0,C_1$. 


A classical problem in conformal geometry is to find condensers which are extremal for the conformal capacity. For instance, the rings of Gr\"otzsch and Teichm\"uller are well-known examples of extremal condensers \cite[\S 5.1.3]{Gehring2017}.   In this paper, we study configurations of linked curves with the least possible conformal capacity. 
To be precise,  we say that two curves $C_0,C_1$ are \textit{linked} if none of them is contractible in the complement of the other.

It is reasonable to expect that a pair of linked curves that is extremal for the conformal capacity should enjoy strong symmetry properties. One such symmetric configuration is given by the standard Hopf link, which consists of the two circles
	$$
	H_0=\{(z_1,0)\in \mb \C^2:|z_1|=1\} , \qquad
	H_1 =  \{(0,z_2)\in \mb \C^2:|z_2|= 1 \}.
	$$
	In particular, we have the following  natural conjecture \cite[page 315]{Gehring2017}:
	
	\begin{conjecture}[Gehring--Martin--Palka]\label{conj:GMP}
	Let $C_0,C_1\subset \mb S^3$ be linked curves. Then
		\begin{equation}
	\label{eq:Hopfoptimal}
 \cp_3(C_0,C_1)\geq   \cp_3(H_0,H_1) = \frac{16\pi^3}{\Gamma(1/4)^4},
	\end{equation}
	 where $\Gamma(t)$ is the Gamma function.
	\end{conjecture}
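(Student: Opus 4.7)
My plan is to combine the modulus/duality characterization of $3$-capacity with a symmetrization adapted to the Hopf fibration. By Ziemer--Gehring duality, $\cp_3(C_0,C_1)$ equals the $3$-modulus of the family of curves joining $C_0$ to $C_1$ and is dually controlled by the modulus of the family of surfaces separating the two components. Since $C_0$ and $C_1$ are linked, every such separating surface has genus at least one; this is the topological input that should drive the bound.

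To pin down the target value, I would compute the capacity of the Hopf link explicitly, exploiting that $\mb S^3\exc(H_0\cup H_1)$ is foliated by the Clifford tori $T_s=\{(z_1,z_2)\in\mb S^3:|z_1|=s\}$, $s\in(0,1)$. The $U(1)\times U(1)$ symmetry of the Hopf configuration forces the extremal potential to depend only on $s$, so in these coordinates the $3$-Dirichlet energy reduces to the one-dimensional variational problem
\[
\min_{u(0)=0,\ u(1)=1}\ 4\pi^2\int_0^1 s(1-s^2)^{3/2}\,|u'(s)|^3\,\d s,
\]
whose Euler--Lagrange equation integrates via a Beta-function identity to $16\pi^3/\Gamma(1/4)^4$.

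For arbitrary linked pairs, I would attempt a comparison scheme: foliate $\mb S^3\exc(C_0\cup C_1)$ by the level surfaces of the extremal potential, apply on each level set a conformal rearrangement that replaces it by a standard Clifford torus while decreasing a suitable weighted area, and reassemble the estimate via the co-area formula. The heart of the argument would then be an isoperimetric-type statement: among all genus-$\geq 1$ surfaces in $\mb S^3$ that separate two linked simple loops, the Clifford torus is extremal for the relevant weighted area functional.

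The principal obstacle --- and almost certainly the reason the conjecture is still open --- is the absence of a $3$-dimensional rearrangement that simultaneously respects the Hopf $\mb S^1$-action and is energy-non-increasing. Standard Schwarz or Steiner symmetrizations in $\R^3$ can unlink the two curves and are useless here. I would therefore target the following conditional statement, which seems within reach and matches what the abstract advertises: if some conformal image of the Clifford torus separates $C_0$ from $C_1$, then after conformally normalizing that torus to the standard Clifford torus one can split the capacity integral across the two solid tori and, on each side, compare with the corresponding half of the Hopf extremal by a rearrangement on each flat torus leaf. This reduction replaces the missing $3$-dimensional symmetrization by a genuinely $2$-dimensional one on flat tori, which is classical.
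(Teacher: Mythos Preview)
First, note that the statement is recorded in the paper as an open \emph{conjecture}; the paper does not prove it, and your proposal correctly concedes this. What the paper establishes is exactly the conditional version you isolate in your last paragraph (Theorem~\ref{thm:introhopf}): if a conformal Clifford torus separates $C_0$ and $C_1$, then $\cp_3(C_0,C_1)\ge\cp_3(H_0,H_1)$.

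Your computation of $\cp_3(H_0,H_1)$ via the one-variable problem in $s=|z_1|$ is correct and matches Lemma~\ref{lemma:hopf}. The general attack through an isoperimetric inequality for genus-$\ge 1$ separating surfaces is not in the paper and is not known; the authors explicitly remark that standard symmetrizations fail here because they destroy linking.

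For the conditional result, your route and the paper's diverge at the decisive step. Both begin by normalizing the separating torus to $\mb T$ and splitting the problem across it. You then propose a ``2-dimensional rearrangement on each flat torus leaf'' of the Hopf foliation $T_s=\{|z_1|=s\}$. The paper instead uses the conformal reflection $\varphi(z_1,z_2)=(z_2,z_1)$ across $\mb T$ to reduce (via a surface-modulus duality argument, Lemma~\ref{lemma:reflectiononT}) to a symmetric pair $(C,\varphi(C))$, obtaining $\Mod_3(\Delta(C,\varphi(C)))=\tfrac12\Mod_3(\Delta(C,\mb T))$. Then, after a stereographic projection sending the core circle to the $z$-axis, it performs iterated reflections across hyperplanes containing the $z$-axis, applied simultaneously to the optimal admissible density and to the curve $C$. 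A separate topological lemma (Lemma~\ref{lemma:linkedgeneral}) guarantees the reflected curve remains linked with the axis; in the limit $C$ becomes axisymmetric, hence contains a round circle, and monotonicity plus conformal equivalence to the Hopf link finishes the proof.

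The gap in your scheme is the leaf-wise rearrangement itself. There is no ``classical'' rearrangement on a flat $2$-torus that simultaneously (i) decreases the full $\int_{\mb S^3}|\tp{d}u|^3$, including the component of $\tp{d}u$ transverse to the leaves, and (ii) returns a function that is again admissible for a pair of \emph{linked curves}. Fibrewise Steiner/Schwarz controls only the tangential gradient; averaging over the $U(1)\times U(1)$ action handles the transverse part by Jensen but destroys the constraint $u=0$ on $C_0$. The paper sidesteps both issues by working with admissible densities for the path modulus rather than with $u$, and by using discrete reflections under which both $\mb T$ and the core circle are invariant; this reflection-plus-linking-lemma package is the idea your outline is missing.
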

In this paper we prove that Conjecture \ref{conj:GMP} holds for linked curves satisfying a certain geometric condition. In order to state our result,  let us denote by $\mb T$ the standard Clifford torus:
	$$\mb T= \{(z_1,z_2)\in \C^2: |z_1|^2=|z_2|^2=1/2\}.$$
	A pair of curves $C_0,C_1\subset \mb S^3$ is \textit{separated by a conformal Clifford torus} if there is a conformal map $\varphi\colon \mb S^3\to\mb S^3$ such that
	$$C_0 \text{ and } C_1 \text{ lie on different components of } \mb S^3\exc \varphi(\mb T).$$
	Note that, in particular, the Hopf link is separated by a conformal Clifford torus.
	We prove the following theorem:
	
	\begin{theorem}\label{thm:introhopf}
	Let $C_0,C_1\subset \mb S^3$ be  linked curves which are separated by a conformal Clifford torus. Then	\eqref{eq:Hopfoptimal} holds.

	\end{theorem}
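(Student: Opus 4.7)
By the conformal invariance of $\cp_3$ I may pull back by $\varphi^{-1}$ and assume $\varphi = \id$, so that $C_0 \subset T^- = \{|z_1| > |z_2|\}$ and $C_1 \subset T^+ = \{|z_1| < |z_2|\}$. I introduce toric coordinates $(z_1, z_2) = (\cos\theta\,e^{i\alpha}, \sin\theta\,e^{i\beta})$ with $\theta \in [0, \pi/2]$ and $\alpha, \beta \in \mb R/2\pi\mb Z$, in which $\mb T = \{\theta = \pi/4\}$, $H_0 = \{\theta = 0\}$, $H_1 = \{\theta = \pi/2\}$, the round metric reads $d\theta^2 + \cos^2\theta\, d\alpha^2 + \sin^2\theta\, d\beta^2$, and $dV = \cos\theta\sin\theta\,d\theta\,d\alpha\,d\beta$. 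By compactness there exist $\theta_0 < \pi/4 < \theta_1$ with $C_0 \subset \{\theta \le \theta_0\}$ and $C_1 \subset \{\theta \ge \theta_1\}$.

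\textbf{Step 2 (Radial slicing and the Hopf identity).} For any admissible $u$, the pointwise estimate $|du|^2 \ge u_\theta^2$ combined with the weighted 1D H\"older inequality along each arc $\gamma_{\alpha, \beta}(\theta) = (\cos\theta\,e^{i\alpha}, \sin\theta\,e^{i\beta})$, with weight $w(\theta) = \cos\theta\sin\theta$ and exponents $(3, 3/2)$, yields
\[
\int_{\mb S^3}|du|^3\, dV \;\ge\; \frac{4\pi}{\Gamma(1/4)^4}\int_0^{2\pi}\!\int_0^{2\pi}\bigl|u(\pi/2, \alpha, \beta) - u(0, \alpha, \beta)\bigr|^3\,d\alpha\, d\beta,
\]
the constant coming from $\int_0^{\pi/2}(\cos\theta\sin\theta)^{-1/2}\,d\theta = \Gamma(1/4)^2/(2\sqrt\pi)$ (Beta integral together with the reflection identity $\Gamma(1/4)\Gamma(3/4) = \pi\sqrt 2$). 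For the Hopf link, $u|_{H_0} \equiv 0$ and $u|_{H_1} \equiv 1$ make the integrand identically $1$ and produce exactly $16\pi^3/\Gamma(1/4)^4$, attained by the radial extremizer $u_*(\theta) = (2\sqrt\pi/\Gamma(1/4)^2)\int_0^\theta(\cos s\sin s)^{-1/2}\,ds$.

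\textbf{Step 3 (Reduction to the Hopf case).} For general $(C_0, C_1)$ the traces $u|_{H_0}, u|_{H_1}$ are not controlled, so Step 2 by itself is not tight. The plan is to replace $u$ by a Clifford-adapted radial rearrangement $u^\sharp(\theta)$ which is still admissible for the Hopf condenser and satisfies $\int|du^\sharp|^3\,dV \le \int|du|^3\,dV$. Setting $V(t) = |\{u \le t\}|$, take $u^\sharp(\theta) = \inf\{t : V(t) \ge 2\pi^2\sin^2\theta\}$. Since $u = 0$ on $C_0 \subset \{\theta \le \theta_0\}$ and $u = 1$ on $C_1 \subset \{\theta \ge \theta_1\}$, one verifies $u^\sharp(0) = 0$ and $u^\sharp(\pi/2) = 1$, so $u^\sharp$ is admissible for $(H_0, H_1)$. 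Granted the rearrangement inequality, Step 2 applied to the radial $u^\sharp$ (for which the integrand is identically $1$) closes the proof.

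\textbf{Main obstacle.} The crux is the P\'olya--Szeg\H o-type inequality $\int|du|^3\,dV \ge \int|du^\sharp|^3\,dV$ in Step 3. Classical P\'olya--Szeg\H o fails here: the level sets $\{\theta \le \theta_*\}$ of $u^\sharp$ are solid tori, which are \emph{not} isoperimetric sets in $\mb S^3$ (geodesic balls are), and a naive level-set rearrangement may even \emph{increase} $\int|du|^3$. A more refined argument is therefore required. A promising route is to pass to the dual modulus formulation $\cp_3(C_0, C_1) = \Mod_3(\Gamma(C_0, C_1))$ (Ziemer) and use Fuglede's inequality $\Mod_3(\Gamma) \ge \bigl(\int_{\mb S^3} h^{3/2}\,dV\bigr)^{-2}$, where $h$ is the Fuglede density of a probability measure on $\Gamma(C_0, C_1)$. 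In the Hopf case, the uniform measure on radial arcs gives $h(\theta) = 1/(4\pi^2\cos\theta\sin\theta)$ and a direct computation yields $\int h^{3/2}\,dV = \Gamma(1/4)^2/(4\pi^{3/2})$, reproducing the sharp constant. Generalising this construction---threading arcs from $C_0$ to $C_1$ through the Clifford foliation---is where the separation hypothesis plays its essential role, ensuring that the family of toric arcs can be anchored to $C_0, C_1$ without homotopy defect.
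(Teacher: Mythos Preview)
Your proposal has a genuine gap that you yourself flag: Step~3 rests on a P\'olya--Szeg\H{o} inequality for the toric rearrangement $u\mapsto u^\sharp$, and this inequality is simply false in the present setting. The level sets $\{\theta\le\theta_*\}$ are solid tori, not isoperimetric regions in $\mb S^3$, so there is no reason for the Dirichlet energy to decrease under this rearrangement. More tellingly, your Steps~1--3 never invoke the linking hypothesis: the admissibility of $u^\sharp$ for $(H_0,H_1)$ follows purely from the volume normalisation, and the claimed energy inequality would then yield \eqref{eq:Hopfoptimal} for \emph{any} pair of continua separated by $\mb T$, linked or not. That conclusion is false (take $C_0$ and $C_1$ to be tiny arcs in their respective solid tori; the capacity is then arbitrarily small), so the rearrangement step cannot be repaired without bringing the topology in somewhere. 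Your suggested Fuglede/dual-modulus alternative is only a sketch; ``threading arcs through the Clifford foliation'' is exactly the point where linking must enter, and you have not indicated how to produce a measure on $\Gamma(C_0,C_1)$ whose Fuglede density is controlled by the Hopf density.

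The paper takes a completely different route that uses linking in an essential way. First, the reflection $\varphi(z_1,z_2)=(z_2,z_1)$ across $\mb T$ is exploited to show (via the dual surface modulus) that $\Mod_3(\Delta(C_0,C_1))\ge\min_i\Mod_3(\Delta(C_i,\varphi(C_i)))$, reducing to a $\varphi$-symmetric link and then to the half-problem $\Mod_3(\Delta(C_0,C_1))=\tfrac12\Mod_3(\Delta(C_1,\mb T))$. Second, one symmetrises $C_1$ by successive reflections in hyperplanes through the core circle of $\mb T$; a topological lemma (winding-number argument) guarantees that the reflected set stays linked with the core, so the modulus can only decrease. After enough reflections $C_1$ contains a round circle $S$, and monotonicity plus a final application of the circle case finishes the proof. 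The linking hypothesis is used precisely to ensure that the symmetrised competitor is nonempty and still linked, which is what your rearrangement approach lacks.
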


	As an immediate consequence, the Hopf link is a local minimizer of the conformal capacity among linked curves, in the topology induced by the Hausdorff distance.
		
	\begin{corollary}\label{cor:links}
There is an explicit $\e>0$ such that, if $C_0,C_1$ is a pair of linked curves with $\tp{dist}(C_0,H_0)\leq \e$  and $\tp{dist}(C_1,H_1)\leq \e$,
	then \eqref{eq:Hopfoptimal} holds.
	\end{corollary}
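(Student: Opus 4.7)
The plan is to reduce Corollary~\ref{cor:links} directly to Theorem~\ref{thm:introhopf}: I aim to show that whenever $C_0, C_1$ are linked and Hausdorff-close to the components of the Hopf link, the pair $(C_0,C_1)$ is already separated by the standard Clifford torus $\mb T$ itself, so that one may take $\varphi = \id$ in the hypothesis of the theorem. No conformal deformation will be needed.

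The starting observation is that the Hopf link components sit strictly on opposite sides of $\mb T$. Writing $U_0 = \{(z_1,z_2)\in \mb S^3 : |z_1|^2 > 1/2\}$ and $U_1 = \{(z_1,z_2)\in \mb S^3 : |z_2|^2 > 1/2\}$, these are the two connected components of $\mb S^3 \exc \mb T$, and $H_i \subset U_i$ for $i=0,1$. A short extremization in the ambient Euclidean metric of $\R^4 \supset \mb S^3$ shows that the (chordal) distance from $H_0 \cup H_1$ to $\mb T$ equals $\delta := \sqrt{2-\sqrt{2}}$, so the Hopf link is uniformly bounded away from the separating torus.

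Given this, I set $\e := \delta/2$, which produces the explicit constant asserted in the statement. If $\tp{dist}(C_i, H_i) \leq \e$ for $i=0,1$, then every point of $C_i$ lies within $\e$ of $H_i$, hence at distance at least $\delta - \e = \delta/2 > 0$ from $\mb T$, and therefore on the same side of $\mb T$ as $H_i$. Thus $C_0 \subset U_0$ and $C_1 \subset U_1$ lie in different connected components of $\mb S^3 \exc \mb T$, so the pair is separated by the identity conformal image of $\mb T$. Since $(C_0, C_1)$ is linked by assumption, Theorem~\ref{thm:introhopf} applies and yields \eqref{eq:Hopfoptimal}.

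There is no real obstacle: the corollary is essentially a free consequence of the theorem, combined with the elementary geometric fact that the Hopf link is strictly separated by the Clifford torus. The only mild verifications needed are the explicit value of $\delta$ — a one-line computation — and the implication that Hausdorff proximity forces containment in the correct open component of $\mb S^3\exc \mb T$, which is immediate from the triangle inequality.
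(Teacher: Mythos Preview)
Your proposal is correct and matches the paper's approach: the paper states the corollary as an immediate consequence of Theorem~\ref{thm:introhopf}, and your argument simply makes explicit the observation that any curve Hausdorff-close to $H_i$ lies in the component $U_i$ of $\mb S^3\exc\mb T$, so the standard Clifford torus already separates the pair. The only minor remark is that the implication ``distance $>0$ from $\mb T$ $\Rightarrow$ same side as $H_i$'' tacitly uses that the $\e$-neighborhood of $H_i$ is connected and contains $H_i$, hence lies entirely in $U_i$; this is immediate but worth stating.
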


	We now make a few comments about the proof of Theorem \ref{thm:introhopf}. A key difficulty of the problem under consideration is that the standard symmetrization methods are not directly applicable and these are the only general tools available for proving optimality of condensers, see e.g., \cite{Gehring2017,Gehring1961,Sarvas1973} as well as the recent book \cite{Dubinin2014}.  Broadly speaking, in a symmetrization argument one shows that the conformal capacity does not increase  if one symmetrizes the condenser with respect to either a linear space or a sphere. 
The method of moving planes  \cite{Alessandrini1992},  for instance, can be seen as a symmetrization method.  
The main difficulty in proving Conjecture \ref{conj:GMP} is that, given two linked curves, it does not seem possible to symmetrize them while preserving the linking property.
	
	The assumption that the linked curves are separated by a conformal Clifford torus, however, allows us to perform a symmetrization argument in order to prove Theorem \ref{thm:introhopf}. The main observation is that the conformal map
$$\varphi\colon \mb S^3\to \mb S^3, \quad (z_1,z_2)\mapsto (z_2,z_1)$$	
is a ``reflection'' on the Clifford torus. 
This allows us to reduce to the case where $C_1=\varphi(C_0)$ and, in this case, we can show that 
$$\cp_3(C_0,C_1)= \frac 1 2 \cp_3(C_0,\mb T).$$ 
While $C_0$ can still be a very irregular curve, $\mb T$ is extremely symmetric, and hence it can be used as a starting point for a symmetrization argument.
	
	We note that even the statement that the Hopf link is a local minimizer for the conformal capacity, as in Corollary \ref{cor:links},  is not obvious and does not follow directly from variational considerations. Indeed,  it is not difficult to see that the conformal capacity on $\mb S^3$ is not differentiable on curves, since curves have codimension two.  This should be contrasted with the more familiar problem of minimizing the capacity between compact hypersurfaces in $\mb R^3$. 
In this case,  variational arguments yield local optimality of concentric spheres in a $C^{2,\a}$-topology, see \cite[\S 2]{DePhilippis2021} or \cite{Mukoseeva2021}. We refer the reader to \cite{Henrot2005} for a more systematic discussion of variational methods in Shape Optimization.
	
		It seems that completely new ideas are required to remove the structural assumption on the competitors in Theorem \ref{thm:introhopf}.  Besides symmetrization methods, another powerful technique to prove symmetry of optimal condensers is to establish a maximum principle for a suitable nonlinear quantity of the capacity function; this quantity is sometimes referred to as a \textit{$P$-function}. This method was introduced by Weinberger \cite{Weinberger1971} and, in the context of capacity problems, it was first used by Payne and Philippin \cite{Payne1979,Payne1991,Philippin1989}. The $P$-function is very powerful in settings where the optimizer is a spherically symmetric condenser \cite{Agostiniani2022,Agostiniani2020a,Garofalo1999} but, to the best of our knowledge, it has never been successfully applied outside spherical symmetry.	

	\subsection{An overview of other related problems and results}
	
In this short subsection  we briefly review other problems previously studied in the literature which, at least  superficially, appear to be related to Conjecture \ref{conj:GMP}. 
	
In \cite{Agol2015}, 	Agol--Marques--Neves showed that the Hopf link is a minimizer of the conformally-invariant M\"obius cross energy  \cite{Freedman1994}. Their proof relies on the important fact that the M\"obius energy of a link is at least the generalized area of the image of the Gauss map of the link. The authors can then use powerful tools from minimal surface theory to conclude \cite{Marques2014}.  

	Gehring \cite{Gehring1971}, and then later Freedman--He \cite{Freedman1991,Freedman1991a},  also considered a problem closely related to Conjecture \ref{conj:GMP}. They looked for configurations of linked solid tori which are optimal for the conformal capacity. Indeed,  there is a natural notion of conformal capacity for a solid torus $T$. We define
$$\cp_3(T) = \inf_{u} \int_{T} |\tp{d} u|^3 \dVol,$$
where the infimum is taken over absolutely continuous functions $u\colon T \to \R \exc \Z$ with degree one. One can then consider the optimization problem 
	$$\sup_T \min\{\cp_3(T), \cp_3(T^*)\},$$
	where $T$ runs over all solid tori embedded in $\mb S^3$ and $T^*=\overline{\mb S^3\exc T}$ is the torus dual to $T$.  
 Freedman--He conjectured that the optimal configuration is when $T$ and $T^*$ are the closures of the two components of $\mb S^3\exc \mb T$:
\begin{conjecture}[Freedman--He]
For all solid tori $T\subset \mb S^3$ we have
$$ \min\{ \cp_3(T), \cp_3(T^*) \}\leq \cp_3(\mb T_\tp{sol}) =\frac{\sqrt{2}-1}{2\pi},$$
where $\mb T_\tp{sol} = \{(z_1,z_2)\in \C^2 : |z_1|^2\leq |z_2|^2 = 1/2\}$ is the solid Clifford torus. 
\end{conjecture}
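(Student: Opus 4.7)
The natural plan is to exploit the same reflection symmetry that underlies Theorem~\ref{thm:introhopf}: the conformal involution $\varphi(z_1,z_2)=(z_2,z_1)$ fixes the Clifford torus $\mb T$ pointwise and swaps $\mb T_\tp{sol}$ with its dual. For any solid torus $T\subset \mb S^3$ that is $\varphi$-invariant after a conformal change of coordinates, one has $\cp_3(T)=\cp_3(T^*)$, so the conjectured inequality reduces to the upper bound $\cp_3(T)\le \cp_3(\mb T_\tp{sol})$ in this symmetric subclass. The first step of the proof is therefore to establish this bound for $\varphi$-invariant tori, and the second step is to reduce the general case to the $\varphi$-invariant one.

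For the first step, I would recast $\cp_3(T)$ as the $3$-modulus of the family of loops in $T$ generating $\pi_1(T)\cong\mb Z$. For a $\varphi$-invariant $T$ containing $H_0$ as a core, one can parametrize level sets of the extremal $u$ by the radial coordinate $|z_1|$, and the capacity reduces to a one-dimensional variational problem over rotationally symmetric densities. An explicit calculation in this class should recover the value $(\sqrt{2}-1)/(2\pi)$ as the unique maximum, attained precisely at $\mb T_\tp{sol}$.

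For the second step I would attempt a conformal straightening: apply a M\"obius transformation of $\mb S^3$ that maps a core curve of $T$ onto $H_0$ and a core curve of $T^*$ onto $H_1$, so that the cores form the Hopf link. I would then try a Schwarz-type symmetrization along the fibers of the Hopf fibration, which is compatible with the $\varphi$-involution, aiming to show that this symmetrization replaces $T$ by a $\varphi$-invariant torus without decreasing $\min\{\cp_3(T),\cp_3(T^*)\}$. Theorem~\ref{thm:introhopf}, applied to the core pair, should serve as an auxiliary ingredient to control the interaction between $T$ and $T^*$ during this symmetrization.

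The hard part is precisely this second step. Symmetrizations typically \emph{decrease} capacity, which is the wrong direction for handling a max--min quantity, and Theorem~\ref{thm:introhopf} only delivers a \emph{lower} bound on the linked-curve capacity rather than any direct control on the solid-torus capacities. A complete proof likely requires a new Loewner-type inequality linking $\cp_3(T)$, $\cp_3(T^*)$, and the conformal capacity of the core Hopf link. Such an inequality does not seem to be available in dimension three, where the two dual curve families on a solid torus (longitudes and meridians) live at conformal exponents $3$ and $3/2$ and do not admit a clean product identity. Producing this missing duality---or circumventing it via a direct constructive comparison of extremal densities---is where genuinely new ideas beyond the reflection trick of this paper appear to be needed.
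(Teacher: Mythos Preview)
The statement you are attempting to prove is the Freedman--He \emph{conjecture}; the paper does not prove it and indeed presents it explicitly as an open problem in the overview subsection on related results. There is therefore no ``paper's own proof'' to compare your proposal against, and your write-up is not a proof either---as you yourself acknowledge in the final paragraph, the second step requires ``genuinely new ideas'' that you do not supply.

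Beyond this, a few specific points in your sketch are inaccurate or unsupported. First, the involution $\varphi(z_1,z_2)=(z_2,z_1)$ does \emph{not} fix the Clifford torus pointwise: a point $(z_1,z_2)\in\mb T$ with $z_1\neq z_2$ is sent to the distinct point $(z_2,z_1)\in\mb T$. The map preserves $\mb T$ only as a set, so the phrase ``fixes the Clifford torus pointwise'' is wrong and the subsequent heuristic about $\varphi$-invariant tori needs to be reformulated. Second, even within the class of rotationally symmetric (Hopf-fiberwise) solid tori, you assert that ``an explicit calculation in this class should recover the value $(\sqrt{2}-1)/(2\pi)$ as the unique maximum,'' but you do not carry this out; this is already a nontrivial one-dimensional optimization and cannot be taken for granted. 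Third, your proposed reduction step---a M\"obius straightening of the cores followed by a Schwarz-type symmetrization along Hopf fibers---runs into exactly the obstruction you identify: symmetrization moves capacity in the wrong direction for a min quantity, and Theorem~\ref{thm:introhopf} gives no leverage on solid-torus capacities. You have correctly diagnosed why the argument stalls, but that diagnosis is a description of the gap, not a bridge across it.

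In short: this is an open conjecture, the paper contains no proof, and your proposal is an honest but incomplete strategy sketch with at least one technical misstatement.
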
	
	Earlier,  Gehring \cite{Gehring1971} had formulated a stronger conjecture   which,  if proved,  would give precise bounds for $\cp_3(T^*)$ in terms of $\cp_3(T)$,  for all solid tori $T$.  He was able to prove these bounds under the assumption that either $T$ or $T^*$ is conformally equivalent to a rotationally symmetric torus. 


	\subsection{Branching of quasiregular maps and Martio's conjecture}\label{sec:qc}
	
	In this subsection we explain the connection between Conjecture \ref{conj:GMP} and  Martio's conjecture on the absence of branching for quasiregular maps in space with small distortion. Although we do not have any definitive result in this direction, we believe this connection is worth exploring further.
	
	Given a domain $\Omega\subset \R^n$, a map $f\in W^{1,n}_\tp{loc}(\Omega,\R^n)$ is said to be $K$-\textit{quasiregular} if there is a constant $K\geq 1$ such that
	$$|\D f|^n\leq K \det \D f \quad \text{a.e.\ in }\Omega,$$
	where $|A|$ denotes the operator norm of $A\in \R^{n\times n}$.  
	
	It is worth discussing the case $K=1$ in some detail, as $1$-quasiregular maps are simply \textit{generalized conformal maps}. The behavior of $1$-quasiregular maps depends dramatically on the dimension. If $n=2$, a map is $1$-quasiregular if and only if it is holomorphic. If $n\geq 3$, Liouville's theorem asserts a map is $1$-quasiregular if and only if it is conformal.  Therefore, for $n\geq 3$, $1$-quasiregular maps are the restriction of M\"obius transformations to $\Omega$ \cite[\S 5]{Iwaniec2001} and, in particular,  they are either constant or homeomorphic.
	
	A quasiregular homeomorphism is said to be \textit{quasiconformal}. Quasiconformal mappings can be characterized in terms of the conformal capacity  \cite{Gehring2017}. A homeomorphism $f\in W^{1,n}_\tp{loc}(\Omega)$ is $K$-quasiconformal if and only if 
$$\frac{1}{K} \cp_n(C_0,C_1) \leq \cp_n(f C_0,fC_1)\leq K \cp_n(C_0,C_1)$$	
for all continua $C_0,C_1\subset \Omega$.
Here, the capacity is taken relative to $\Omega$.  
In other words, a homeomorphism $f$ is quasiconformal if and only if the conformal capacity is quasi-invariant under $f$.  More generally, a similar characterization holds for quasiregular mappings, provided that $f$ satisfies mild topological assumptions  \cite[6.2 and 7.1]{Martio1969}.  

	In \cite{Martio1971b}, Martio conjectured that there is a strong form of stability in Liouville's theorem.  To state his conjecture precisely note that, for any quasiregular map $f$, one can define the \textit{inner distortion} $K_I(f)$ through $K_I(f)=\sup_{x\in \Omega} K_I(f,x)$, where 
	$$K_I(f,x)= 
	\begin{cases}
	\frac{ |\tp{cof}(\D f(x))|^n }{ (\det \D f(x))^{n-1} }& \text{if } \det \D f(x)>0,\\ 
	1 &\text{otherwise}.
	\end{cases}$$
As usual, we denote by $B_f$  the \textit{branch set} of $f$, i.e., the set of points where $f$ is not a local homeomorphism. 
	\begin{conjecture}[Martio]\label{conj:Martio}
	Let $n\geq 3$ and  $f\colon \Omega\to \R^n$ be non-constant and quasiregular.  If  $K_I(f)<2$, then $B_f=\emptyset$.
	\end{conjecture}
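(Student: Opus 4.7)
The plan is to try to deduce Conjecture \ref{conj:Martio} from Conjecture \ref{conj:GMP} via capacity estimates at a branch point. Suppose for contradiction that $f\colon\Omega\to\R^3$ is non-constant and quasiregular with $K_I(f)<2$ and $B_f\neq\emptyset$. Fix $x_0\in B_f$. By the local structure theory for quasiregular maps, $x_0$ admits a normal neighborhood $U$ on which $f$ is a branched cover of degree $d=i(x_0,f)\geq 2$ onto $V=f(U)$, with $B_f\cap U$ of topological dimension one. The threshold $K_I<2$ is sharp: the $2$-winding map $(r,\theta,z)\mapsto (r,2\theta,z)$ in cylindrical coordinates is a genuine $2$-fold branched cover with $K_I=2$, so any argument must exploit some strict inequality.

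The core of the strategy is a two-sided capacity comparison. Choose a condenser $(E_0,E_1)$ in $V$ with $E_0$ a small loop encircling the image $f(B_f\cap U)$ of the branch set and $E_1$ a transversal loop, arranged so that the $d$ lifts of $E_1$ in $U$ each link the corresponding lift of $E_0$. Then the preimage configuration contains a linked pair of curves, and (after a conformal embedding $U\hookrightarrow\mb S^3$) Conjecture \ref{conj:GMP} yields
\begin{equation*}
\cp_3(f^{-1}(E_0)\cap U,\, f^{-1}(E_1)\cap U)\geq \frac{16\pi^3}{\Gamma(1/4)^4}.
\end{equation*}
On the other hand, using the lift $u\circ f$ of the capacity extremal for $(E_0,E_1)$ as a test function gives the classical Martio--Rickman--V\"ais\"al\"a upper bound
\begin{equation*}
\cp_3(f^{-1}(E_0)\cap U,\, f^{-1}(E_1)\cap U)\leq d\cdot K_O(f)\cdot \cp_3(E_0,E_1),
\end{equation*}
where the factor $d$ reflects the multiplicity of the branched cover and the outer distortion $K_O(f)$ is controlled by $K_I(f)^{n-1}$ for $n=3$.

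The hope is that, for a sharp choice of $(E_0,E_1)$, the combination of the two bounds forces $K_I(f)\geq 2$. The main obstacle is that the naive combination is not sharp: the capacity $\cp_3(E_0,E_1)$ can be made small by separating $E_0$ from $E_1$ in $V$, but as they move apart the linking of their preimages is eventually destroyed, creating a delicate balance. Moreover, at the $2$-winding map itself (where $K_I=2$) the two inequalities must be asymptotically saturated, so a quantitative refinement is needed to rule out $K_I<2$. A natural candidate is a ``multiply-linked'' extension of Conjecture \ref{conj:GMP} asserting that the capacity of $d$ pairwise linked curves is at least $d$ times the Hopf capacity, which would immediately give $K_I(f)\geq d\geq 2$.

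Proving such a multiply-linked variant is the hardest part and does not follow from our methods: the reflection symmetry along the Clifford torus that is central to the proof of Theorem \ref{thm:introhopf} relies essentially on the two-component structure of the link and does not obviously adapt to links with more components. Moreover, Theorem \ref{thm:introhopf} imposes the Clifford-torus separation hypothesis, which has no clear interpretation on the preimage side of a branched cover. A complete proof of Conjecture \ref{conj:Martio} along these lines therefore appears to require either a substantial extension of the symmetrization method employed here or a genuinely new ingredient --- perhaps a $P$-function argument sensitive to the branch-set topology, in the spirit of the discussion in \S\ref{sec:links}.
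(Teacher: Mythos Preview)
The statement you are attempting to prove is labeled as a \emph{Conjecture} in the paper, and the paper does not supply a proof of it. Section \ref{sec:qc} only discusses the heuristic connection between Conjectures \ref{conj:GMP} and \ref{conj:Martio} and explicitly says ``we do not have any definitive result in this direction.'' So there is no proof in the paper to compare your proposal against.

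Your proposal is, by your own admission, not a proof either: you outline a strategy and then explain why it does not close. That is an honest assessment, but a few of the intermediate steps are also shakier than you indicate. First, the lower bound you invoke from Conjecture \ref{conj:GMP} is itself an open conjecture, so the argument is conditional at best; and even conditionally, the Hopf constant $16\pi^3/\Gamma(1/4)^4$ bounds the capacity relative to all of $\mb S^3$, whereas what you need is a bound relative to the normal neighborhood $U$, and passing to a subdomain only increases capacity, so that direction is fine but you should say so. Second, the upper bound you write uses $K_O(f)$, and the relation $K_O\le K_I^{n-1}$ for $n=3$ gives $K_O<4$ from $K_I<2$, which is far too lossy to produce the sharp threshold $2$. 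Third, and most seriously, the right-hand side $d\cdot K_O(f)\cdot \cp_3(E_0,E_1)$ contains the free quantity $\cp_3(E_0,E_1)$, which has no a priori relation to the Hopf constant; you acknowledge this, but it means the two inequalities simply do not couple without an additional sharp identity for the image condenser.

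In short: the paper offers no proof, and your proposal correctly identifies itself as an incomplete heuristic rather than a proof.
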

		
	Martio's conjecture remains open and the best general result, due to Rajala \cite{Rajala2005}, asserts that if $K_I(f)<1+\e(n)$ then $B_f=\emptyset$. Here, $\e(n)>0$ is computable but very small.  The conjecture is known to hold under rather strong assumptions on $f$ \cite{Kauranen2021,Tengvall2022,Tengvall2020}. However, branching is not necessarily related to lack of differentiability of $f$, as there are $C^2$ quasiregular maps with branching \cite{Bonk2004a,Kaufman2005}. Conjecture \ref{conj:Martio} is also known to hold under regularity assumptions on $B_f$ \cite{Gutlyanskii2000,Martio1971b}.  
	
	

If $f$ is quasiregular then the set $f(\Omega)\exc f(B_f)$ is not simply connected, see e.g.\ \cite[\S III, Lemma 5.2]{Rickman1993}, and so one is naturally led to consider curve families linked with $B_f$.  
Let us consider the simplest possible scenario: we take $n=3$ and assume that 
\begin{equation}
\label{eq:line}
B_f\text{ is a topological line. }
\end{equation}
We emphasize that Martio's conjecture remains open even under these strong assumptions.  Consider  the winding map $w\colon \R^3\to \R^3$, defined in cylindrical coordinates through 
	$$w\colon (r,\theta,z)\mapsto (r,2\theta,z).$$
It is easy to verify that
$$K_I(w) = 2 \text{ a.e.\ in } \R^3, \qquad B_w = \{(x,y,z)\in \R^3: x=y=0\},$$
so in particular \eqref{eq:line} holds. The map $w$ also shows that the bound $K_I<2$ in Conjecture \ref{conj:Martio} cannot be improved.  Note that a stereographic projection maps the Hopf link to
	$$\tilde H_0 = B_w,  \qquad \tilde H_1=\{(x,y,z)\in \R^3: z=0, x^2 + y^2=1\}.$$
	Clearly $\tilde H_0$ and $\tilde H_1$ are linked curves that are kept invariant by $w$. 
	
	It is reasonable to expect that, among maps satisfying \eqref{eq:line}, the winding map is extremal.  Assumption \eqref{eq:line} implies that
\begin{equation}
\label{eq:conjugate}
f=\varphi\circ w\circ \psi, \qquad \text{for some } \varphi, \psi \text{ are quasiconformal},
\end{equation}
see \cite[Theorem 4.1]{Church1960} or \cite{Luisto2021,Martio1971} for more general results in this direction. It is natural to conjecture that $K_I(w) \leq K_I(f)$ for all  $f$  as in \eqref{eq:conjugate},
see  \cite[Question 6.1]{Tengvall2022}.  This problem appears to be closely related to Conjecture \ref{conj:GMP}.


\subsection{Outline}
Section \ref{sec:prelims} is a preliminary section and there we gather several useful facts that will be used throughout the paper. We define the relevant topological notions and in Section \ref{sec:cap} we state some basic facts about the $p$-capacity. In Section \ref{sec:pathmod} we introduce the path and surface moduli, which are technically more convenient to work with than the $p$-capacity and are essentially equivalent quantities. In Section \ref{sec:S3} we briefly review the relevant geometry of $\mb S^3$ and in particular we calculate the conformal capacity of the Hopf link.  Finally, Section \ref{sec:proof} contains the proof of Theorem \ref{thm:introhopf}.

	\section{Preliminaries}\label{sec:prelims}
	
	Throughout this section, $\mc M$ denotes a smooth $n$-dimensional Riemannian manifold with Riemannian metric $g$. In practice, we will always consider the cases where either $\mc M=\R^n$ or $\mc M=\mb S^n$, equipped with the standard metrics.  We denote by $\dVol$ and $\mathscr H^{n-1}$ the induced volume form and Hausdorff measure, respectively, and by $\hodge$ and $\tp d^*$ the Hodge star and the codifferential. All other differential operators, such as $\n$ and $\ddiv$, are defined with respect to $g$. 
	
	We now recall some topological notions.
	A \textit{continuum} $C\subset \mc M$ is a connected compact subset which is \textit{non-degenerate} in the sense that it contains more than one point. A \textit{curve} is the continuous image of the unit interval and typically we  identify the map with its image.  In addition, as for continua, we assume that each curve contains more than one point. 
	
	A set $E\subset \mc M$ \textit{separates} $C_0,C_1\subset \mc M$ if $E$ is disjoint from $C_0\cup C_1$ and no connected component of $\mb S^n\backslash E$ contains points from both $C_0$ and $C_1$.   Two curves $C_0,C_1$ are \textit{linked} in $\mb S^3$ provided that $C_0$ is not null-homotopic in $\mb S^3\exc C_1$ and that $C_1$ is not null-homotopic in $\mb S^3\exc C_0$.
	
	\subsection{Condensers and Capacity}\label{sec:cap}
	
%
	
	Throughout this paper we assume that $1<p<\infty$. Given two continua $C_0,C_1\subset \mc M$, we define their $p$-capacity through
	\begin{equation}
	\label{eq:defcap}
	\cp_p(C_0,C_1)= \inf_{u\in \mc A(C_0,C_1)} \int_{\mc M} |\tp{d} u |^p \dVol,
	\end{equation}
	where $\mc A(C_0,C_1)$ is the class of absolutely continuous functions $u\colon \mc M \to [0,1]$ such that $u=0$ in $C_0$ and $u=1$ in $C_1$.  The $n$-capacity is also known as the  \textit{conformal capacity}.
	
	The infimum in \eqref{eq:defcap} is attained by a unique function $u\in W^{1,p}(\mc M,[0,1])$, which we call the \textit{capacity function} of the condenser $(C_0,C_1)$. This function is $p$-harmonic, i.e.
	\begin{equation}
	\label{eq:pharmonic}
	\d^* (|\tp{d} u|^{p-2} \d u)=0,
	\end{equation}
	and therefore it enjoys the following basic regularity property:
	
	\begin{lemma}\label{lemma:reg}
	Let $C_0,C_1\subset \mc M$ be continua and let $u$ be the corresponding capacity function.  There is $\a>0$ such that $u\in C^{1,\a}_\loc(\mb S^3 \exc (C_0\cup C_1))$. 
	\end{lemma}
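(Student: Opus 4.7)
The plan is to reduce the regularity assertion to the standard interior regularity theory for weak $p$-harmonic functions. The first step is to verify that the capacity function satisfies \eqref{eq:pharmonic} weakly on the open set $U = \mc M \exc (C_0\cup C_1)$. A truncation argument combined with the uniqueness of the minimizer shows that the pointwise constraint $u\in [0,1]$ is automatic: any minimizer of the $p$-Dirichlet energy among $W^{1,p}$ functions with $u=0$ on $C_0$ and $u=1$ on $C_1$ must take values in $[0,1]$, since otherwise truncating at $0$ and $1$ would strictly decrease the energy. Consequently $u$ is an unconstrained local minimizer of the $p$-Dirichlet energy on $U$, and the usual first variation with arbitrary test functions $\psi\in C_c^\infty(U)$ yields
$$\int_{\mc M} |\tp d u|^{p-2}\langle \tp d u, \tp d \psi\rangle\dVol = 0,$$
which is the weak form of \eqref{eq:pharmonic}.

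The second step is to invoke the classical interior $C^{1,\a}$ theory for weak solutions of the $p$-Laplace equation, due to Uraltseva, Evans, Lewis, Tolksdorf and DiBenedetto. Written in local coordinates on $\mc M$, equation \eqref{eq:pharmonic} becomes a quasilinear elliptic equation of $p$-Laplace type whose structural coefficients are determined by the components of the Riemannian metric $g$ and are smooth and uniformly bounded from above and below on any compact subset of $U$. The structural hypotheses of the quoted regularity theorems are therefore satisfied, and since $u$ is a bounded weak solution one concludes $u \in C^{1,\a}_\loc(U)$ for some exponent $\a = \a(n,p) > 0$.

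There is no genuine obstacle here, as every step is classical. The only points worth double-checking are that the obstacle $0\leq u\leq 1$ does not interfere with the derivation of the Euler--Lagrange equation, which is the content of the truncation argument above (one may alternatively first establish $0<u<1$ throughout $U$ via the strong maximum principle for $p$-harmonic functions, and then perturb freely), and that the transition from the Euclidean setting to a Riemannian manifold is harmless, which is clear since regularity is a purely local property and the metric contributes only smooth bounded coefficients.
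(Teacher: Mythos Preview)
Your argument is correct and is exactly the standard route: first observe that the truncation $\min(\max(u,0),1)$ shows the constraint $0\le u\le 1$ is inactive, so $u$ is an unconstrained local minimizer of the $p$-Dirichlet energy on $U$ and hence a weak solution of \eqref{eq:pharmonic}; then appeal to the interior $C^{1,\a}$ regularity theory for $p$-harmonic functions, which localizes to coordinate charts with smooth metric coefficients.

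The paper itself does not give a proof of this lemma at all; it is simply stated as a known regularity fact and the text moves on directly to Lemma~\ref{lemma:capacitybdry}. So your proposal does not conflict with the paper's proof --- there is none to compare with --- but rather supplies the classical justification that the authors chose to omit.
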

	

	The following lemma gives another way of calculating the capacity of a condenser.  For the sake of completeness we give the simple proof here.
	
	\begin{lemma}
	\label{lemma:capacitybdry}
Let $C_0,C_1\subset \mc M$ be continua and let $u$ be the corresponding capacity function, which we assume satisfies $\inf_{U_i} |\n u|>0$ for some neighborhoods $U_i$ of $C_i$.  If $t$ is a regular value of $u$, then
	$$\cp_p(C_0,C_1) = \int_{\{u=t\}} |\n u|^{p-1} \d \mathscr H^{n-1}.$$
		\end{lemma}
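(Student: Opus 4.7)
The plan is to combine the coarea formula with the weak formulation of the $p$-harmonic equation \eqref{eq:pharmonic} satisfied by $u$.

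First, since $u\in W^{1,p}(\mc M)$ takes values in $[0,1]$, the coarea formula yields
$$ \int_{\mc M} |\n u|^p \dVol = \int_0^1 F(s) \d s, \qquad F(s) := \int_{\{u=s\}} |\n u|^{p-1} \d \mathscr H^{n-1}.$$

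Second, I would show that $F$ equals some constant $C$ almost everywhere on $(0,1)$. Fix $\eta \in C^\infty_c((0,1))$ and set $\phi := \eta(u)$. Since $u$ is continuous with $u=0$ on $C_0$ and $u=1$ on $C_1$, the function $\phi$ vanishes on a neighborhood of $C_0 \cup C_1$; in particular $\phi \in W^{1,p}_0(\mc M \exc (C_0 \cup C_1))$ and $\n \phi = \eta'(u)\n u$ by the Sobolev chain rule. Testing \eqref{eq:pharmonic} against $\phi$ and reapplying the coarea formula gives
$$0=\int_{\mc M}|\n u|^{p-2}\n u \cdot \n \phi \dVol=\int_{\mc M}\eta'(u)|\n u|^p \dVol= \int_0^1 \eta'(s) F(s) \d s.$$
As $\eta$ is arbitrary, $F \equiv C$ a.e.\ on $(0,1)$, and the first step identifies the constant $C$ with $\cp_p(C_0,C_1)$.

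Third, I would check that $F(t)=C$ at any regular value $t \in (0,1)$. By Lemma \ref{lemma:reg} and the implicit function theorem, $|\n u|$ is bounded away from zero in a neighborhood of the compact set $\{u=t\}$, and the nearby level sets form a $C^{1,\a}$ foliation. Applying the coarea formula locally in this foliated neighborhood shows that $s \mapsto F(s)$ is continuous on some interval $(t-\delta, t+\delta)$, so the almost-everywhere identity $F=C$ upgrades to $F(t)=C$.

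I do not anticipate any serious obstacle. The subtle points are the admissibility of $\phi = \eta(u)$ as a Sobolev test function and the continuity of $F$ at regular values; both are handled by standard arguments in the Sobolev framework. The hypothesis $\inf_{U_i}|\n u|>0$ is not actually needed in this lemma, but it will be convenient in later applications, e.g.\ to guarantee the existence of regular values arbitrarily close to $s=0$ and $s=1$.
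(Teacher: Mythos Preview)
Your argument is correct, but it follows a different path from the paper's. The paper works pointwise with the divergence theorem: since the vector field $|\n u|^{p-2}\n u$ is divergence-free, applying the divergence theorem on the region $\{t_0<u<t_1\}$ between two regular level sets shows directly that $t\mapsto \int_{\{u=t\}}|\n u|^{p-1}\d\mathscr H^{n-1}$ is constant on regular values; a second application with the field $u\,|\n u|^{p-2}\n u$ then identifies the constant as $(t_1-t_0)^{-1}\int_{\{t_0<u<t_1\}}|\n u|^p$, and one sends $t_0\searrow 0$, $t_1\nearrow 1$. Your route instead passes through the coarea formula to write $\cp_p=\int_0^1 F$, and then tests the weak $p$-harmonic equation with $\phi=\eta(u)$ to obtain $\int_0^1 \eta' F=0$, i.e.\ $F$ is a.e.\ constant, upgrading to the regular value $t$ by local $C^{1,\a}$ continuity of the foliation. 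The two arguments are essentially dual: your test functions $\eta(u)$ are smoothed versions of the characteristic functions of the slabs used in the paper's divergence-theorem computation. Your approach has the minor advantage of never needing to justify the divergence theorem on a domain whose boundary is only $C^{1,\a}$; the paper's approach is slightly more explicit and avoids the (routine) continuity-of-$F$ step. Your remark that the hypothesis $\inf_{U_i}|\n u|>0$ is not actually used is accurate for both proofs: Sard's theorem already supplies regular values arbitrarily close to $0$ and $1$, which is all the paper's limiting step requires.
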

	
	\begin{proof}
	The proof is a simple application of  the divergence theorem, and we use the fact that, by \eqref{eq:pharmonic}, the vector field $|\n u|^{p-2} \n u$ is divergence free.  
So,  whenever $0<t_0<t<t_1<1$ are regular values of $u$, we have
	\begin{equation}
	\label{eq:indept}
	\int_{\{u=t_0\}} |\n u|^{p-2} \p_{\nu} u \d \mathscr H^{n-1} + \int_{\{u=t_1\}} |\n u|^{p-2} \p_{\nu} u \d \mathscr H^{n-1} = 0, 
	\end{equation}
	where $\nu$ is the outer unit normal to $\{t_1<u<t_2\}$.  We have $\p_{\nu} u = |\n u|$ on $\{u=t_1\}$ and $\p_{\nu} u = -|\n u|$ on $\{u=t_0\}$,  thus \eqref{eq:indept} shows that 
	$$t\mapsto \int_{\{u=t\}} |\n u|^{p-1} \d\mathscr H^{n-1} \tp{ does not depend on } t,$$
	provided that $t$ is a regular value. We now compute
	\begin{align*}
	\int_{\{t_0<u<t_1\}} |\n u|^{p} \dVol & = 
	\int_{\{u=t_1\}} u |\n u|^{p-2} \p_\nu u \d \mathscr H^{n-1} -\int_{\{u=t_0\}} u |\n u|^{p-2} \p_\nu u \d \mathscr H^{n-1} \\
	& = (t_1-t_0) \int_{\{u=t\}} |\n u|^{p-1} \d \mathscr H^{n-1},
	\end{align*}
	and the conclusion follows by sending $t_1\nearrow 1$ and $ t_0\searrow 0$.
	\end{proof}

	\subsection{Path and Surface Modulus}\label{sec:pathmod}

	Instead of operating directly with the conformal capacity, in most of this paper it will  be  technically convenient to work with the conformal path modulus. In our setting, the two quantities are equivalent, see already Lemma \ref{lemma:cap=mod} below.  The conformal modulus was introduced by Ahlfors and Beurling \cite{Ahlfors1950} as a powerful tool to study planar quasiconformal mappings. It was extended to modulus of surfaces and measures by Fuglede \cite{Fuglede1957}.  
	
	Given a collection $\Gamma$ of paths on $\mc M$,   the \textit{path $p$-modulus} of $\Gamma$ is defined as
	$$\Mod_p(\Gamma)= \inf_{\rho\in \mc A(\Gamma)} \int_{\mc M} \rho^p \dVol,$$
	where $\mc A(\Gamma)$ is the class of measurable functions $\rho\colon \mc M\to [0,+\infty]$ such that
	$$\int_{\gamma} \rho \d \mathscr H^1\geq 1 \tp{ for all } \gamma\in \Gamma.$$
	We say that functions in $\mc A(\Gamma)$ are \textit{admissible} for $\Gamma$.

	%
	%

	For the reader's convenience, 	we list some basic properties of the modulus.  Let $\{\Gamma_i\}_{i \in \mb N}$ be path families.  Following \cite{Gehring2017}, we say that $\{\Gamma_i\}_{i \in \mb N}$ are \textit{separated} if there is an associated family of pairwise disjoint Borel sets $E_i \subset \mc M$ so that
	\begin{align*}
	\int_\gamma \chi_{\mc M \setminus E_i} \d\ms H^1 = 0,
	\end{align*}
	whenever $\gamma \in \Gamma_i$ is locally rectifiable. The proof of the next lemma can be found in 	 \cite[Lemma 4.2.1 and Theorem 4.2.9]{Gehring2017}.
	
	\begin{lemma}
	\label{lemma:modprops}
	The $p$-modulus  is an outer measure on path families:
	\begin{enumerate}
	\item $\Mod_p(\emptyset)=0$,
	\item if $\Gamma'\subseteq \Gamma$ then $\Mod_p(\Gamma')\le \Mod_p(\Gamma)$,
	\item given curve families $(\Gamma_j)_{j\in \N}$, we have $\Mod_p\paren{\bigcup_{j=1}^\infty \Gamma_i} \le \sum_{i=1}^\infty \Mod_p(\Gamma_j)$.
	\end{enumerate}		
	The $p$-modulus has an additional property:
	\begin{enumerate}[resume]
	\item\label{it:separate}   given separated curve families $(\Gamma_j)_{j\in \mb N}$,  we have $				\Mod_p\paren{\bigcup_{j=1}^\infty \Gamma_j} = \sum_{j=1}^\infty \Mod_p(\Gamma_j).$
	\end{enumerate}
	\end{lemma}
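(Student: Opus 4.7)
The plan is to treat (i)--(iii) as standard outer-measure identities, proved by constructing explicit admissible densities, and to establish the nontrivial additivity in (iv) by using the separating Borel sets $E_j$ to decompose a nearly-optimal density for the union.

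Properties (i) and (ii) are immediate from the definitions: the zero density is admissible for the empty family, and any density admissible for a larger family is automatically admissible for each subfamily. For the countable subadditivity (iii), given $\e>0$ I would, for each $j$, choose $\rho_j \in \mc A(\Gamma_j)$ with $\int_{\mc M}\rho_j^p\dVol \leq \Mod_p(\Gamma_j) + \e 2^{-j}$ and form the $\ell^p$-sum $\rho := \bigl(\sum_j \rho_j^p\bigr)^{1/p}$. Since $\rho\geq \rho_j$ pointwise, $\rho$ is admissible for $\bigcup_j\Gamma_j$, and monotone convergence yields $\int_{\mc M}\rho^p\dVol = \sum_j \int_{\mc M}\rho_j^p\dVol \leq \sum_j \Mod_p(\Gamma_j) + \e$, from which (iii) follows by sending $\e \to 0$.

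The substantive step is (iv), whose direction $\leq$ is (iii). For the reverse, fix $\e > 0$ and pick $\rho \in \mc A(\bigcup_j \Gamma_j)$ with $\int_{\mc M} \rho^p \dVol \leq \Mod_p(\bigcup_j \Gamma_j) + \e$, and set $\rho_j := \rho \chi_{E_j}$. The crucial observation is that the separation hypothesis $\int_\gamma \chi_{\mc M \setminus E_j}\d\ms H^1 = 0$ for locally rectifiable $\gamma\in\Gamma_j$ says exactly that the arc-length measure of $\gamma \cap (\mc M\setminus E_j)$ vanishes. Since $\rho \chi_{\mc M \setminus E_j}$ is then supported on an $\ms H^1$-null subset of $\gamma$, its path integral vanishes, and hence $\int_\gamma \rho_j \d\ms H^1 = \int_\gamma \rho \d\ms H^1 \geq 1$. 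Thus $\rho_j$ is admissible for $\Gamma_j$ (non-locally-rectifiable paths being handled by the usual convention that their path integrals are $+\infty$). Disjointness of the $E_j$ then yields
\[\sum_j \Mod_p(\Gamma_j) \leq \sum_j \int_{\mc M} \rho_j^p \dVol = \sum_j \int_{E_j} \rho^p \dVol \leq \int_{\mc M} \rho^p \dVol \leq \Mod_p\paren{\bigcup_j \Gamma_j} + \e,\]
and sending $\e \to 0$ completes the proof.

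The only real obstacle is the admissibility transfer in (iv), i.e.\ carrying a density admissible for the union onto each individual piece. This is precisely what the separation hypothesis is designed to make possible: it guarantees that, modulo $\ms H^1$-null sets, every rectifiable path in $\Gamma_j$ lies entirely inside $E_j$, so truncating $\rho$ outside $E_j$ does not diminish its path integrals along $\Gamma_j$. Everything else is combinatorial bookkeeping or standard measure theory.
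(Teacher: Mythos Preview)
Your proof is correct and is the standard argument; the paper itself does not prove this lemma but simply refers to \cite[Lemma 4.2.1 and Theorem 4.2.9]{Gehring2017}, where essentially the same proof appears. In particular your treatment of (iv)---truncating a near-optimal density by the separating sets $E_j$ and using the separation hypothesis to transfer admissibility---is exactly the textbook approach.
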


			Given a pair of continua $C_0,C_1\subset \mc M$, let $\Delta(C_0,C_1)$ denote the family of paths that connect $C_0$ and $C_1$.	
%
%
%
%
	When $p=n$, the connecting modulus is a conformal invariant which is equivalent to the conformal capacity of the corresponding condenser \cite[Theorem 5.2.3]{Gehring2017}. In fact, this equivalence  holds for all $p$, see {\cite[Theorem 7.31]{Heinonen2001}.
	\begin{lemma}\label{lemma:cap=mod}
	Let $C_0,C_1\subset \mc M$ be continua.  Then
	$$\cp_p(C_0,C_1)=\Mod_p(\Delta(C_0,C_1)).$$
	\end{lemma}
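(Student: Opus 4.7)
The plan is to establish the two inequalities separately.

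For the easy direction $\Mod_p(\Delta(C_0,C_1)) \leq \cp_p(C_0,C_1)$, I would take any admissible $u \in \mc A(C_0,C_1)$ and use $\rho = |\tp d u|$ as a candidate density. Since $u$ is absolutely continuous, for every locally rectifiable $\gamma \in \Delta(C_0,C_1)$ parametrized by arc length, the fundamental theorem of calculus gives
$$\int_\gamma |\tp d u| \d \ms H^1 \geq |u(\gamma(b)) - u(\gamma(a))| = 1;$$
for non-rectifiable paths the line integral is $+\infty$ by convention. Thus $\rho \in \mc A(\Delta(C_0,C_1))$ and $\Mod_p(\Delta(C_0,C_1)) \leq \int_{\mc M} |\tp d u|^p \dVol$; taking the infimum over $u$ yields the inequality.

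For the reverse direction, given admissible $\rho \in \mc A(\Delta(C_0,C_1))$, the natural candidate is
$$u(x) = \min\Bigl\{ 1, \inf_\gamma \int_\gamma \rho \d \ms H^1 \Bigr\},$$
where the infimum runs over all locally rectifiable paths joining $C_0$ to $x$. The boundary conditions $u = 0$ on $C_0$ and $u = 1$ on $C_1$ are immediate (the latter from admissibility of $\rho$). The crucial claim is that $\rho$ is an upper gradient of $u$: for every locally rectifiable curve $\beta$ from $x$ to $y$,
$$|u(y) - u(x)| \leq \int_\beta \rho \d \ms H^1,$$
which one verifies by concatenating $\beta$ with near-optimal paths from $C_0$ to $x$ and comparing the defining infima at the two endpoints.

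The main obstacle is to upgrade this upper-gradient inequality to the pointwise Sobolev bound $|\n u| \leq \rho$ almost everywhere, which would yield $\cp_p(C_0,C_1) \leq \int_{\mc M} \rho^p \dVol$ and then the desired inequality after taking the infimum over $\rho$. The standard tool is a Fubini-type argument: because $\rho \in L^p(\mc M)$, the family of lines along which $\rho$ fails to be integrable has vanishing $p$-modulus, and on the remaining lines the upper-gradient inequality forces $u$ to be absolutely continuous with derivative bounded by $\rho$. The ACL characterization of Sobolev functions then gives $u \in W^{1,p}(\mc M)$ with $|\tp d u| \leq \rho$ a.e., completing the argument. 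Full details can be found in \cite[Theorem 7.31]{Heinonen2001} and \cite[Chapter 5]{Gehring2017}.
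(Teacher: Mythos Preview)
The paper does not prove this lemma; it simply records the statement and cites \cite[Theorem 5.2.3]{Gehring2017} for $p=n$ and \cite[Theorem 7.31]{Heinonen2001} for general $p$. Your sketch is precisely the standard argument contained in those references---which you yourself cite at the end---so there is nothing to compare: the approaches coincide.
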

	
The infimum in the definition of the modulus is attained by a unique density. The optimal density is $\rho= |\nabla u|$, where $u$ is the capacity function of the condenser $(C_0,C_1)$. Conversely, given an optimal density $\rho$, we can recover the capacity function through
	$$u(x) = \inf_{\gamma_x} \int_{\gamma_x} \rho \d \ms H^1,$$
	where $\gamma_x$ is any curve connecting $x$ to $C_0$ in $\mb S^3\exc(C_0\cup C_1)$.
	
	In addition to the path modulus, we will consider the surface modulus.   We refer the reader to \cite{Aikawa1999, Freedman1991, Gehring1962b, Ziemer1967} and \cite{Kangasniemi2022} for a general  discussion of this concept.  Following \cite[\S 3]{Ziemer1967},  let $\Sigma(C_0,C_1)$ denote the class of smooth hypersurfaces which separate $C_0,C_1$ in $\mc M$. 
	For each set $\sigma \in \Sigma(C_0,C_1)$, we can define a measure $\mu_\sigma$ as
	\begin{align*}
	\mu_\sigma(A) = \mathscr H^{n-1}(A\cap \sigma \exc (C_0\cup C_1)),
	\end{align*}
	where $A$ is any $\mathscr H^{n-1}$-measurable set.  The \textit{surface $p$-modulus} of $\Sigma(C_0,C_1)$ is defined as 
	$$\Mod_p(\Sigma(C_0,C_1)) = \inf_{\rho \in \mc B(C_0,C_1)} \int_{\mc M} \rho^p\dVol,$$
	where $\mc B(C_0,C_1)$ is the class of measurable functions $\rho \colon \mb S^3 \to [0,+\infty]$ such that
	\begin{align*}
	\int_{\sigma} \rho \d\mu_\sigma \ge 1 \text{ for all } \sigma \in \Sigma(C_0,C_1).
	\end{align*}
	As before, $\rho$ is called \textit{admissible} for the family $\Sigma(C_0,C_1)$.
	
	The next result asserts that the path and surface moduli are dual to each other.
	\begin{theorem}[Duality]\label{thm:duality}
	Let $p \in (1,\infty)$ and let $\frac 1 p + \frac 1 q = 1$.  If $C_0,C_1\subset \mc M$ are continua, then
	$$\Mod_p(\Delta(C_0,C_1))^{1/p} \Mod_{q}(\Sigma(C_0,C_1))^{1/q}=1.$$
	\end{theorem}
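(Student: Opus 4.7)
The plan is to construct an explicit admissible density for $\Sigma(C_0,C_1)$ out of the capacity function of $(C_0,C_1)$, and then to show via a coarea-type argument that no other density can do better. By Lemma \ref{lemma:cap=mod} we have $\Mod_p(\Delta(C_0,C_1))=\cp_p(C_0,C_1)$, so the theorem reduces to the identity $\Mod_q(\Sigma(C_0,C_1))=\cp_p(C_0,C_1)^{1-q}$.

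For the upper bound on $\Mod_q(\Sigma(C_0,C_1))$, let $u$ be the capacity function of $(C_0,C_1)$ and set
$$\eta^{\ast}:=\cp_p(C_0,C_1)^{-1}\,|\n u|^{p-1}.$$
Admissibility of $\eta^{\ast}$ for $\Sigma(C_0,C_1)$ is forced by \eqref{eq:pharmonic}: since $|\n u|^{p-2}\n u$ is divergence-free, the divergence theorem implies that its flux through any $\sigma\in\Sigma(C_0,C_1)$ coincides with its flux through a regular level set of $u$, which is $\cp_p(C_0,C_1)$ by Lemma \ref{lemma:capacitybdry}. The pointwise inequality $|\n u|^{p-2}\n u\cdot\nu\leq|\n u|^{p-1}$ then yields $\int_\sigma\eta^{\ast}\d\mu_\sigma\geq 1$. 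A direct computation using $(p-1)q=p$ gives $\|\eta^{\ast}\|_{L^q}^{q}=\cp_p(C_0,C_1)^{1-q}$, and hence $\Mod_q(\Sigma(C_0,C_1))^{1/q}\leq \cp_p(C_0,C_1)^{-1/p}$.

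For the reverse inequality, let $\eta\in\mc B(C_0,C_1)$ be arbitrary. By the classical interior regularity theory for $p$-harmonic functions, $u$ is $C^{\infty}$ on the open set $\{|\n u|>0\}$, so Sard's theorem guarantees that $\{u=t\}$ is a smooth hypersurface lying in $\Sigma(C_0,C_1)$ for a.e.\ $t\in(0,1)$. Admissibility of $\eta$ then gives $\int_{\{u=t\}}\eta\d\mc H^{n-1}\geq 1$ for such $t$, and combining with the coarea formula and H\"older's inequality,
$$1\leq\int_0^1\int_{\{u=t\}}\eta\d\mc H^{n-1}\d t=\int_{\mc M}\eta\,|\n u|\dVol\leq\|\eta\|_{L^q}\cdot\cp_p(C_0,C_1)^{1/p}.$$
Thus $\|\eta\|_{L^q}\geq\cp_p(C_0,C_1)^{-1/p}$, and taking the infimum over $\eta$ matches the upper bound, completing the proof.

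The main technical subtlety I foresee is handling the critical set $\{|\n u|=0\}$ of the only $C^{1,\a}$-regular capacity function: one must verify that a.e.\ level set is a genuine element of $\Sigma(C_0,C_1)$, and relatedly that the coarea formula and the divergence-theorem step are applied within their respective ranges of validity. These issues are standard within Fuglede's framework \cite{Fuglede1957,Ziemer1967}, but would require a short technical passage to justify rigorously in the present setting.
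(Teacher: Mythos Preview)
The paper does not actually prove Theorem \ref{thm:duality}; immediately after the statement it cites \cite{Ziemer1967}, \cite{Gehring1962b}, and \cite{Aikawa1999} for the result and moves on. Your proof sketch is correct and is essentially the classical argument found in those references: build an explicit admissible density from the capacity function, then use coarea plus H\"older for the matching lower bound.

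It is worth noting that the paper \emph{does} prove the closely related Theorem \ref{thm:modformdensity}, and that argument runs almost exactly parallel to yours, only phrased in terms of $(n-1)$-forms rather than scalar densities: the optimal form is $\omega=\cp_q^{-1}\hodge|\tp du|^{q-2}\tp du$, admissibility comes from Lemma \ref{lemma:capacitybdry} and closedness of $\omega$, and the lower bound is again coarea plus H\"older. So your argument and the paper's proof of Theorem \ref{thm:modformdensity} are two faces of the same computation; had the authors chosen to prove Theorem \ref{thm:duality} directly, it would presumably look like what you wrote.

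One small caveat: your appeal to Lemma \ref{lemma:capacitybdry} inherits that lemma's standing hypothesis $\inf_{U_i}|\n u|>0$ near $C_i$, which is not automatic for general continua. This can be sidestepped by combining the coarea formula with the divergence theorem between two \emph{interior} regular level sets (deducing directly that $t\mapsto\int_{\{u=t\}}|\n u|^{p-1}$ is constant, without sending $t\to 0,1$), so the argument still closes; but it belongs on the list of technical points you flag at the end.
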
 
	This theorem was proved in \cite[Theorem 3.10]{Ziemer1967} for $p = n$ and a more general class of separating sets, but the result still holds for $\Sigma(C_0,C_1)$ \cite{Gehring1962b} and for all $p$ \cite{Aikawa1999}.
	
	When studying $\Mod_p(\Sigma(C_0,C_1))$, it will be convenient to use differential forms instead of densities in $\mc B(C_0,C_1)$.  
	Given a family $\Sigma$ of smooth hypersurfaces, we define
	\begin{align*}
	\M_p(\Sigma) = \inf_{\omega \in \mc F(\Sigma)} \int_{\mc M} |\omega|^p\dVol,
	\end{align*}
	where $\mc F(\Sigma)$ is the class of $(n-1)$-forms on $\mc M$ such that
	\begin{align*}
	\int_{\sigma} \omega  \geq 1 \text{ for all } \sigma \in \Sigma.
	\end{align*}
This new notion of modulus coincides with the previous one.
	\begin{theorem}\label{thm:modformdensity}
	Let $p \in (1,\infty)$.  If $C_0, C_1\subset \mc M$ are two continua, then
	\begin{align*}
	\M_p(\Sigma(C_0,C_1)) = \Mod_p(\Sigma(C_0,C_1)).
	\end{align*}
	\end{theorem}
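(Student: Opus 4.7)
The plan is to prove the two inequalities separately. The inequality $\Mod_p(\Sigma(C_0,C_1)) \leq \M_p(\Sigma(C_0,C_1))$ is immediate: for any admissible form $\omega \in \mc F(\Sigma(C_0,C_1))$, the pointwise norm $\rho := |\omega|$ lies in $\mc B(C_0,C_1)$, since for every smooth separating hypersurface $\sigma$,
$$\int_\sigma \rho \d \mu_\sigma \geq \left|\int_\sigma \omega \right| \geq 1.$$
Hence $\Mod_p(\Sigma(C_0,C_1)) \leq \int_\mc M |\omega|^p \dVol$, and taking the infimum over $\omega$ yields the inequality.

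For the reverse inequality, I would first apply the duality Theorem \ref{thm:duality} with $p$ and the conjugate exponent $q := p/(p-1)$ interchanged, together with Lemma \ref{lemma:cap=mod}, to obtain $\Mod_p(\Sigma(C_0,C_1)) = \cp_q(C_0,C_1)^{1-p}$. It then suffices to exhibit an explicit $\omega \in \mc F(\Sigma(C_0,C_1))$ attaining this value. To this end, let $v$ be the $q$-capacity function of $(C_0,C_1)$ from Section \ref{sec:cap}, which satisfies \eqref{eq:pharmonic} with $p$ replaced by $q$, and set
$$\omega := \cp_q(C_0,C_1)^{-1} \cdot |\tp d v|^{q-2} \hodge \tp d v.$$
The $q$-harmonicity of $v$ is equivalent to $\tp d \omega = 0$, so by Stokes' theorem applied in the region between any two smooth separating hypersurfaces (which lies inside the open set where $v$ is $C^{1,\a}$ by Lemma \ref{lemma:reg}), the integral $\int_\sigma \omega$ does not depend on $\sigma \in \Sigma(C_0,C_1)$. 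Evaluating it on a regular level set $\{v=t\}$, where $\n v/|\n v|$ is the unit normal, yields $\cp_q^{-1} \int_{\{v=t\}} |\n v|^{q-1} \d \ms H^{n-1}$; the flux argument used in the proof of Lemma \ref{lemma:capacitybdry}, combined with the coarea formula applied to $\cp_q = \int_\mc M |\n v|^q \dVol$, identifies this quantity as $\cp_q^{-1} \cdot \cp_q = 1$. Thus $\omega$ is admissible, and since $(q-1)p = q$,
$$\int_\mc M |\omega|^p \dVol = \cp_q^{-p} \int_\mc M |\n v|^q \dVol = \cp_q^{1-p},$$
matching $\Mod_p(\Sigma(C_0,C_1))$ and closing the proof.

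The main technical obstacle is the rigorous application of Stokes' theorem to $\omega$, which is only continuous (not smooth) due to the limited regularity of $v$ at its critical set, and to general smooth separating surfaces $\sigma$ that need not be level sets of $v$. This is handled as in the proof of Lemma \ref{lemma:capacitybdry}, by exploiting the fact that the vector field $|\n v|^{q-2} \n v$ is weakly divergence-free (which is the content of the $q$-harmonic equation) and by invoking Sard's theorem to ensure a plentiful supply of regular values of $v$ on which to perform the flux computation.
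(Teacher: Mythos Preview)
Your proposal is correct and follows essentially the same route as the paper: both reduce to the identity $\M_p(\Sigma(C_0,C_1)) = \cp_q(C_0,C_1)^{1-p}$ (equivalently $\cp_q^{-p/q}$) and verify it by exhibiting the explicit closed form $\omega = \cp_q^{-1}\,|\tp d v|^{q-2}\hodge\tp d v$ built from the $q$-capacity function, checking its admissibility on regular level sets via Lemma~\ref{lemma:capacitybdry}. The only stylistic difference is in the easy inequality: you obtain $\Mod_p \le \M_p$ directly by taking $\rho = |\omega|$, whereas the paper instead reformulates admissibility for $\M_p$ via the coarea formula as $\int_{\mc M}\omega\wedge\tp d u\ge 1$ for all $u\in\mc A(C_0,C_1)$ and then applies H\"older's inequality; your shortcut is cleaner here.
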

	
	As before, the theorem still holds if $\Sigma(C_0,C_1)$ is replaced with a class of more general separating sets and we refer the reader to the \cite{Aikawa1999,Ziemer1967} and \cite{Freedman1991} for this and similar results. 
	
	\begin{proof}
	By Lemma \ref{lemma:cap=mod} and Theorem \ref{thm:duality} it suffices to show that 
	$$\M_p(\Sigma(C_0,C_1)) = \Mod_q(\Delta(C_0,C_1))^{-p/q} = \cp_q(C_0,C_1)^{-p/q},$$ 
	where $\frac 1 p + \frac 1 q = 1$.  
	We can rewrite $\M_p(\Sigma(C_0,C_1))$ as the infimum over $(n-1)$-forms $\omega$ so that $ \tp{d}\omega = 0$ and 
	\begin{align*}
	\int_{\mc M} \omega \wedge \tp{d} u\ge 1,
	\end{align*}
	for any admissible $u\in \mc A(C_0,C_1)$.
	Indeed, by the coarea formula,
	\begin{align*}
	\int_{\mc M} \omega\wedge  \tp{d}u &= \int_0^1 \int_{u^{-1}(t)} \Big\langle \frac{ \tp{d}u}{| \tp{d}u|},*\omega\Big\rangle \d\mathscr H^{n-1} =  \int_0^1 \int_{u^{-1}(t)}\omega ,
	\end{align*}
	where in the last equality we used the fact that $ \tp{d}u/| \tp{d}u|$ is the normal vector to $u^{-1}(t)$.
	Since $u^{-1}(t)$ is a surface separating $C_0$ and $C_1$, we see that if $\omega \in \mc F(\Sigma(C_0,C_1))$ then  $\int_{\mc M} \omega \wedge \tp du \ge 1$.  Conversely, if we have $\int_{\mc M} \omega \wedge \tp{d} u \ge 1$ for every $u\in \mc A(C_0,C_1)$ then, given $\sigma\in \Sigma(C_0,C_1)$, we can consider a sequence $u_j$ of approximations to the characteristic function of $\sigma$ in order to conclude that $\int_\sigma \omega \ge 1$.
	
	Let $u$ be the unique minimizer for $\cp_q(C_0,C_1)$.  For any closed $(n-1)$-form $\omega$, by H\"older's inequality we have
	\begin{align*}
	1 &\le \int_{\mc M} \omega \wedge \tp du \le \paren{\int_{\mc M} |\omega|^p\dVol}^{1/p}\paren{\int_{\mc M} |\tp du|^q \dVol}^{1/q}
	\end{align*}
	and thus
	\begin{align}\label{eq:dualitylowerbound}
	\cp_q(C_0,C_1)^{-1/q} \le \paren{\int_{\mc M} |\omega|^p \dVol}^{1/p}.
	\end{align}
	We claim that equality is attained by $\omega = \cp_q(C_0,C_1)^{-1}\hodge |\tp du|^{q-2}\tp du$, and this will complete the proof.   To prove the claim,  note that the $q$-harmonicity of $u$ implies that $\tp d\omega = 0$ and since $\tp d^2u = 0$ we have that $\tp d^*( |\omega|^{p-2}\omega) = 0$.  Fix $\sigma \in \Sigma(C_0,C_1)$.  Since $\omega$ is closed,  it has the same integral over homologous surfaces, and so 
	\begin{align*}
	\int_\sigma \omega &= \int_{u^{-1}(t)} \omega = \cp_q(C_0,C_1)^{-1} \int_{u^{-1}(t)}  \hodge |\tp du|^{q-2}\tp d u = \cp_q(C_0,C_1)^{-1} \int_{u^{-1}(t)} |\tp du|^{q-1} = 1,
	\end{align*}
	where the last equality follows from Lemma \ref{lemma:capacitybdry}.  
	The $p$-norm of $\omega$ is
	\begin{align*}
	\int_{\mc M} |\omega|^{p}  \dVol &= \cp_q(C_0,C_1)^{-p}{\int_{\mc M} |\tp d u|^q} \dVol = \cp_q(C_0,C_1)^{1-p}.
	\end{align*}
	This is the equality case for \eqref{eq:dualitylowerbound}, which implies that $\cp_q(C_0,C_1)^{-p/q} = \M_p(\Sigma(C_0,C_1))$.	
	\end{proof}
	
	Similarly to Lemma \ref{lemma:modprops} \ref{it:separate},  we have the following result, which has an identical proof:
	
	\begin{lemma}\label{lemma:mprop}
	If $\Sigma_1$ and $\Sigma_2$ are \textit{separate}, i.e. there are disjoint Borel sets $E_1$ and $E_2$ so that 
	\begin{align*}
	\int_\sigma \chi_{\mc M \setminus E_i} = 0
	\end{align*}
	for all $\sigma \in \Sigma_i$, then 
	\begin{align*}
	\M_p(\Sigma_1 \cup \Sigma_2) = \M_p(\Sigma_1) + \M_p(\Sigma_2).
	\end{align*}
	\end{lemma}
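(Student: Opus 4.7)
The plan is to mirror the proof of the strong additivity of the path modulus (Lemma \ref{lemma:modprops}\ref{it:separate}): the separation hypothesis lets one decouple competitors on $\Sigma_1 \cup \Sigma_2$ into independent competitors on $\Sigma_1$ and $\Sigma_2$, and conversely. The key observation I would set up at the outset is that, with $E_1, E_2$ the disjoint Borel sets witnessing the separation, for every $(n-1)$-form $\omega$ and every $\sigma \in \Sigma_i$ one has $\int_\sigma \omega = \int_\sigma \chi_{E_i}\omega$; this is just the separation condition $\int_\sigma \chi_{\mc M\setminus E_i}\d\mu_\sigma = 0$ rewritten.

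For the upper bound $\M_p(\Sigma_1 \cup \Sigma_2) \le \M_p(\Sigma_1) + \M_p(\Sigma_2)$, I would take nearly optimal $\omega_i \in \mc F(\Sigma_i)$ and glue them as $\omega := \chi_{E_1}\omega_1 + \chi_{E_2}\omega_2$. The disjointness of $E_1$ and $E_2$ combined with the observation above gives $\int_\sigma \omega = \int_\sigma \omega_i \ge 1$ whenever $\sigma \in \Sigma_i$, so $\omega \in \mc F(\Sigma_1 \cup \Sigma_2)$. Since the summands have disjoint supports, $|\omega|^p = \chi_{E_1}|\omega_1|^p + \chi_{E_2}|\omega_2|^p$ pointwise, which integrates to $\int_{\mc M}|\omega|^p\dVol \le \int_{\mc M}|\omega_1|^p\dVol + \int_{\mc M}|\omega_2|^p\dVol$, and letting $\omega_i$ approach the optima yields the bound.

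For the reverse inequality, I would start from any $\omega \in \mc F(\Sigma_1 \cup \Sigma_2)$ and split it as $\omega_i := \chi_{E_i}\omega$; the separation condition then forces $\int_\sigma \omega_i = \int_\sigma \omega \ge 1$ for $\sigma \in \Sigma_i$, so $\omega_i \in \mc F(\Sigma_i)$. Disjointness of $E_1$ and $E_2$ yields $\int_{\mc M}|\omega|^p\dVol \ge \int_{E_1}|\omega|^p\dVol + \int_{E_2}|\omega|^p\dVol = \int_{\mc M}|\omega_1|^p\dVol + \int_{\mc M}|\omega_2|^p\dVol \ge \M_p(\Sigma_1) + \M_p(\Sigma_2)$, and taking the infimum in $\omega$ completes the proof. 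I do not anticipate any substantive obstacle: the argument is essentially a bookkeeping exercise exploiting the disjoint supports, the only mild care being that the truncated forms $\chi_{E_i}\omega$ genuinely lie in the admissible class $\mc F(\Sigma_i)$, which is immediate from the definition since no smoothness is required of competitors.
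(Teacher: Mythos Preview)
Your proof is correct and follows essentially the same approach the paper indicates: the paper does not spell out a proof but simply states that it is identical to that of Lemma~\ref{lemma:modprops}\ref{it:separate}, and your argument is precisely the natural adaptation of that proof to the form-modulus setting.
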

	

	\subsection{Geometry in $\mb S^3$}
	\label{sec:S3}
	Most of our analysis will take place in 
$$\mb S^3=\{(z_1,z_2)\in \C^2:|z_1|^2+|z_2|^2=1\},$$	
	which is conveniently parametrized up to a null set by the \textit{Hopf coordinates}. These are coordinates $(\eta,\xi_1,\xi_2)\in [0,\pi/2]\times [0,2\pi)\times [0,2\pi)$ such that 
	$$
	z_1 = e^{i \xi_1} \cos \eta, \qquad z_2 = e^{i \xi_2} \sin \eta.
	$$
	In these coordinates,  the round metric $g_{\mb S^3}$ on $\mb S^3$ can be expressed as
	$$g_{\mb S^3} = \d \eta^2 + (\sin \eta)^2 \d \xi_1^2 + (\cos \eta)^2 \d \xi_2^2$$ 
	and thus 
	\begin{align*}
	\dVol_{\mb S^3} = \sin \eta \cos \eta \d \eta \wedge\tp{d}\xi_1 \wedge \tp{d}\xi_2.
	\end{align*}
	
	
	The \textit{Clifford torus} is the smooth manifold
	$$\mb T = \Big\{(z_1,z_2)\in \mb S^3:|z_1|^2= \frac{1}{2}, |z_2|^2=\frac{1}{2}\Big\},$$
	which separates the two components $H_0,H_1$ of the Hopf link.
	We note that there is a stereographic projection which maps $\mb T$ to
	\begin{equation}
	\label{eq:stereocliff}
	\widetilde{\mb T}= \left\{((\sqrt 2+\cos s) \cos t,(\sqrt 2 + \cos s)\sin t, \sin s),s,t\in [0,2\pi]\right\}\subset \R^3;
	\end{equation}
	thus $\widetilde{\mb T}$ consists of a tube of radius $1$ around a horizontal circle of radius $\sqrt{2}$. A \textit{conformal Clifford torus} is a torus $T=\varphi(\mb T)$, where $\varphi \colon\mb S^3\to \mb S^3$ is a M\"obius transformation.
	
	%
	
	In the next lemma we calculate the conformal modulus of the Hopf link. 
	
	\begin{lemma}\label{lemma:hopf}
	Let $(H_0,H_1)$ be the Hopf link. Then
	$$\Mod_3(\Delta(H_0,H_1)) =  \frac{16 \pi^3}{\Gamma(\frac 1 4)^4}.$$
	\end{lemma}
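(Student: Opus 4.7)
The plan is to exploit the toric symmetry of the Hopf link in Hopf coordinates in order to reduce the problem to a one-dimensional variational problem, which can be solved explicitly via Hölder's inequality. In these coordinates $H_0 = \{\eta = 0\}$ and $H_1=\{\eta = \pi/2\}$, and the volume form factorizes as $\dVol_{\mb S^3} = \sin\eta\cos\eta\,\tp d\eta\wedge \tp d\xi_1\wedge \tp d\xi_2$. By Lemma \ref{lemma:cap=mod}, it suffices to compute the conformal capacity $\cp_3(H_0,H_1)$.

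For the upper bound, I would take a radial test function $u = u(\eta)$ with $u(0)=0$ and $u(\pi/2)=1$. Since $|\tp du|=|u'(\eta)|$, the energy becomes
\begin{equation*}
\int_{\mb S^3}|\tp du|^3\dVol = 4\pi^2 \int_0^{\pi/2} |u'(\eta)|^3 \sin\eta\cos\eta\,\tp d\eta.
\end{equation*}
Applying Hölder's inequality with exponents $3$ and $3/2$ to the identity $\int_0^{\pi/2} u'(\eta)\,\tp d\eta = 1$,
\begin{equation*}
1 \le \paren{\int_0^{\pi/2}|u'|^3 \sin\eta\cos\eta\,\tp d\eta}^{1/3}\paren{\int_0^{\pi/2} (\sin\eta\cos\eta)^{-1/2}\,\tp d\eta}^{2/3},
\end{equation*}
with equality when $u'(\eta) = c(\sin\eta\cos\eta)^{-1/2}$ for an appropriate constant $c$ fixing the boundary conditions.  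The substitution $t = \sin^2\eta$ converts the second integral into $\tfrac12 B(1/4,1/4) = \Gamma(1/4)^2/(2\sqrt\pi)$, and this precise constant is what produces the desired value $16\pi^3/\Gamma(1/4)^4$ after insertion into the factor $4\pi^2$ above. This yields the upper bound, and since $u'$ is integrable, the candidate $u$ is genuinely admissible.

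For the matching lower bound, given an arbitrary $u\in\mc A(H_0,H_1)$, I would average over the $\mb T^2$ action on the angular variables: set
\begin{equation*}
\bar u(\eta) = \frac{1}{4\pi^2}\int_0^{2\pi}\int_0^{2\pi} u(\eta,\xi_1,\xi_2)\,\tp d\xi_1\tp d\xi_2,
\end{equation*}
which lies in $\mc A(H_0,H_1)$ and depends only on $\eta$.  By Jensen's inequality applied to $t\mapsto |t|^3$ and Fubini,
\begin{equation*}
\int_0^{\pi/2}|\bar u'|^3\sin\eta\cos\eta\,\tp d\eta \le \frac{1}{4\pi^2}\int_{\mb S^3}|\p_\eta u|^3\dVol \le \frac{1}{4\pi^2}\int_{\mb S^3}|\tp du|^3\dVol,
\end{equation*}
using that $|\p_\eta u|\le |\tp du|$ since $\p_\eta$ is a unit vector in the metric $g_{\mb S^3}$. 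Applying the Hölder inequality of the previous paragraph to $\bar u$ then yields the matching lower bound.

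I do not foresee a serious obstacle; the argument is clean once the right coordinates are chosen. The only technical point worth double-checking is the identification of the Beta integral with $\Gamma(1/4)^2/(2\sqrt\pi)$ via $\Gamma(1/2)=\sqrt\pi$, but this is a routine computation. An alternative route would be to solve the Euler--Lagrange equation $\tp d^\hodge(|u'|\,u'\sin\eta\cos\eta) = 0$ directly, which reproduces the same extremal density $u'(\eta)=c(\sin\eta\cos\eta)^{-1/2}$, but the Hölder route is more transparent and avoids any uniqueness discussion.
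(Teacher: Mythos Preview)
Your proof is correct and takes a genuinely different route from the paper's. Both arguments work in Hopf coordinates and arrive at the same extremal density $u'(\eta)=c(\sin\eta\cos\eta)^{-1/2}$, but the logic differs. The paper invokes the existence and uniqueness of the capacity function, makes the symmetry Ansatz $u=u(\eta)$, solves the resulting Euler--Lagrange ODE $\frac{\tp d}{\tp d\eta}(\cos\eta\sin\eta\,|u'|u')=0$ explicitly, and then evaluates the capacity via the level-set identity of Lemma~\ref{lemma:capacitybdry} on the Clifford torus $\{\eta=\pi/4\}$. You instead run a direct two-sided variational argument: the upper bound by plugging in the H\"older extremal, and the lower bound by averaging an arbitrary competitor over the isometric $\mb T^2$-action and applying Jensen together with $|\p_\eta u|\le |\tp du|$. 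Your route is more self-contained---it needs neither the uniqueness of the $p$-harmonic minimizer nor Lemma~\ref{lemma:capacitybdry}---and it makes explicit why the radial reduction is legitimate rather than leaving it implicit in the uniqueness statement. The paper's approach, on the other hand, ties the computation more closely to the structure used elsewhere (the capacity function and its level-set representation), which is convenient for the duality arguments in Section~\ref{sec:proof}. The Beta-function identification $\tfrac12 B(1/4,1/4)=\Gamma(1/4)^2/(2\sqrt\pi)$ is handled identically in both.
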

	
	\begin{proof}
	By Lemma \ref{lemma:cap=mod} our task is to compute the energy of the 3-harmonic function $u$ with $u=i$ on $H_i$. Since $H_0=\{\eta =0\}$ and $H_1=\{\eta=\frac \pi 2\}$, we make the Ansatz that $u$ should depend only on $\eta$.  With $\tp{d} u = u'(\eta)\d \eta$, we can then compute
	$$*|\tp{d}u|\d u= \cos \eta \sin\eta |u'(\eta)| u'(\eta) \d \xi_1\wedge \tp{d} \xi_2$$
	and hence the 3-harmonic equation \eqref{eq:pharmonic} reduces to the ordinary differential equation
	\begin{equation}
	\label{eq:ODE}
	\frac{\d}{\d \eta}\left(\cos\eta \sin \eta |u'(\eta)| u'(\eta)\right)=0.
	\end{equation}
	Assuming that $u'(\eta)>0$ and $u(0)=0$, the general solution to \eqref{eq:ODE} is 
	$$u(\eta) = c \int_0^\eta \frac{\d \theta}{\sqrt{\cos \theta \sin \theta}}= c\int_0^{(\sin\eta)^2} \frac{\d t}{(1-t)^{3/4} t^{3/4}}.$$
	At $\eta=\frac \pi 2$, this integral can be written in terms of the beta function $B$:
	$$1=u(\pi/2) = \frac c 2 B(1/4,1/4) 
	\quad \implies \quad c=\frac {2 \sqrt \pi}{\Gamma(1/4)^2},$$
	where we used the classical identity
	$$B(1/4,1/4)=\frac{\Gamma(1/4)^2}{\Gamma(1/2)} = \frac{\Gamma(1/4)^2}{\sqrt \pi},$$
	cf. \cite[(2.13)]{Artin1964}.
	Since $u'(\eta)=c(\cos \eta \sin \eta)^{-\frac 1 2}$, we have
	$$\Mod_3(\Delta(H_0,H_1)) = \int_{\{\eta=\frac \pi 4\}} |\tp{d}u|^2 \d \mathscr H^2 = (\sqrt{2}c)^2 \mathscr H^2(\mb T) = 2c^2 \times 2\pi^2 =  \frac{16 \pi^3}{\Gamma(\frac 1 4)^4}.$$
	In the first equality, we used the fact that $\{\eta=\frac \pi 4\}$ is a regular level set of $u$ together with Lemmas \ref{lemma:capacitybdry} and \ref{lemma:cap=mod}.
	\end{proof}

		\section{Proof of Theorem \ref{thm:introhopf}}\label{sec:proof}

	In this section we will prove Theorem \ref{thm:introhopf}.  We begin by solving the much easier problem where one of the curves, say $C_0$,  is assumed to be a circle. In this case, we want to consider suitable symmetrizations of $C_1$.  In order to be able to do so, we need to guarantee that linking is preserved under symmetrization. We begin with a simple version of the topological lemma we require.
	
	\begin{lemma}
	\label{lemma:linked}
	Let $C_0=\{x=y=0\}\subset \R^3$ be the $z$-axis and let $C_1$ be a curve which is linked with it.  For any hyperplane $H$ which contains $C_0$,  let $U^+$ and $U^-$ be the two half-spaces on either side of $H$ and let $\widetilde C_1$ be the compact set obtained by reflecting $C_1\cap U^+$ on $H$, thus
$$\widetilde C_1=(C_1\cap U^+)\cup R(C_1 \cap U^+)$$
where $R$ is the reflection on $H$.	
	 Then $\widetilde C_1$ is still linked with $C_0$.
	\end{lemma}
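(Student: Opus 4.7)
The strategy is to exhibit a closed sub-loop $\gamma_0 \subset \widetilde C_1$ which itself links $C_0$; since $\R^3\setminus \widetilde C_1 \subset \R^3\setminus \gamma_0$, any null-homotopy of $C_0$ in the former would yield one in the latter, contradicting the linking of $\gamma_0$ and $C_0$. After rotating coordinates we may assume $H=\{y=0\}$ and $R(x,y,z)=(x,-y,z)$; then $H\setminus C_0$ has the two components $H^\pm=\{y=0,\,\pm x>0\}$. We first reduce to the case where $C_1$ is smoothly parametrized and meets $H$ transversally, so that $P:=C_1\cap H$ is finite and splits as $P=P^+\sqcup P^-$ according to which half of $H\setminus C_0$ contains each point.

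Lift $\arg(x+iy)$ to a continuous real-valued function $\tilde\theta$ along the parametrization of $C_1$; the linking hypothesis forces the total increment of $\tilde\theta$ to equal $2\pi w$ with $w=\mathrm{lk}(C_0,C_1)\neq 0$. On each open arc of $C_1\cap U^+$ the function $\tilde\theta$ lies in a single slab $(2\pi k,\,2\pi k+\pi)$, and its increment along such an arc lies in $\{-\pi,0,+\pi\}$, equal to $\pm\pi$ precisely when the two endpoints of the arc lie in different halves $H^+,H^-$. Grouping terms yields $\pi(\alpha^{+-}-\alpha^{-+}) = \pi w$, where $\alpha^{ij}$ counts the $U^+$-arcs starting in $P^i$ and ending in $P^j$. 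Hence, because $w\neq 0$, there exists an arc $\alpha\subset C_1\cap U^+$ joining a point $p\in H^+$ with a point $q\in H^-$.

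Because $p,q$ are fixed by $R$, the reflected arc $R(\alpha)\subset U^-$ also joins them, and the closed loop $\gamma_0$ obtained by concatenating $\alpha$ with the reversed reflection $R(\alpha)^{-1}$ is contained in $\widetilde C_1$. Using the symmetry $\arg\circ R=-\arg$, a direct computation along $\gamma_0$ shows that the total angular increment around the $z$-axis equals $\pm 2\pi$, equivalently $\mathrm{lk}(\gamma_0,C_0)=\pm 1\neq 0$. This exhibits the desired sub-loop, and the proof is complete.

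The most delicate step is the transversal reduction: $C_1$ is merely the continuous image of an interval, and the symmetrization $C_1\mapsto \widetilde C_1$ is not continuous in $C_1$ at tangencies with $H$. To transfer the conclusion from smooth approximations $C_1^\e\to C_1$ back to $C_1$, one uses that any null-homotopy disk of $C_0$ in $\R^3\setminus \widetilde C_1$ sits at positive distance from $\widetilde C_1$, and hence also from $\widetilde{C_1^\e}$ for all sufficiently small $\e$.
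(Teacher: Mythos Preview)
Your argument is correct and shares the paper's core idea: locate an arc of $C_1\cap\overline{U^+}$ whose endpoints lie in different halves $H^+,H^-$ of $H\setminus C_0$, then reflect it to produce a closed loop in $\widetilde C_1$ with winding number $\pm 1$ about the $z$-axis. The paper reaches this arc more directly: it lifts the angle along the \emph{continuous} parametrization of $C_1$ and applies the intermediate value theorem to find $a<b$ with $C_1(a),C_1(b)\in H$, $C_1([a,b])\subset\overline{U^+}$, and $\theta(b)-\theta(a)=\pm\pi$. No smoothness or transversality is needed, so the reduction and the limiting step at the end of your proof are unnecessary detours.

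Two small remarks on your write-up. First, the identity $\pi(\alpha^{+-}-\alpha^{-+})=\pi w$ does not literally follow from ``grouping terms'' in the total increment $2\pi w$, since that total also includes the $U^-$-arcs; one needs the observation that the endpoint types of consecutive $U^+$- and $U^-$-arcs match, which forces $\alpha^{+-}-\alpha^{-+}=\beta^{-+}-\beta^{+-}$ and hence each half contributes $\pi w$. Alternatively, count signed crossings of the ray $H^+$ directly. Second, your limiting argument is valid provided one takes $U^+$ closed (as the lemma's phrase ``compact set'' suggests): then $\widetilde{C_1^\e}\subset N_\delta(\widetilde{C_1})$ for small $\e$ by upper semicontinuity, and the null-homotopy disk indeed misses $\widetilde{C_1^\e}$. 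With the open half-space this step would fail, since $\widetilde{C_1^\e}$ can acquire pieces near $C_1\cap H$ that are absent from $\widetilde{C_1}$.
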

	
	Note that, since $C_1$	is linked with $C_0$, both $C_1\cap U^+$ and $C_1\cap U^-$ are non-empty,  thus $\widetilde C_1$ is non-empty as well.
	
	\begin{proof}
%
%
%
%
%
Clearly we can assume that $C_0$ and $C_1$ are disjoint, as otherwise there is nothing to prove. 
Let us denote by $\pi\colon \R^3\exc C_0\to P$ the projection $(x,y,z)\mapsto (x,y,0)$, where 
$$P=\{(x,y,0): (x,y)\neq (0,0)\}$$ is the punctured horizontal plane. Note that $P$ is a deformation retract of  $\R^3\exc C_0$, according to $t\mapsto t \pi+(1-t) \id$.   Thus $\pi$ induces an isomorphism of fundamental groups, and any curve $C$ in $\R^3\exc C_0$ is null-homotopic if and only if $\pi(C)$ is null-homotopic in $P$.

By the previous paragraph, $\widetilde{C_1}$ is linked with $C_0$ if and only if $\pi(\widetilde{C_1})$ is linked with $\{0\}$ in $P$.   It is easy to see that any curve $C$ in $P$ is linked with $\{0\}$ if and only if the winding number of $C$ about $0$ is non-zero; equivalently, if we write $C(t)=|C(t)| e^{i \theta(t)}$ for some continuous angle function $\theta\colon [0,1]\to \R$,  $C$ is linked with $\{0\}$ in $P$ if and only if $\theta(1)-\theta(0)\in 2\pi \Z\exc \{0\}$.

Since $\pi(C_1)$ is linked with $\{0\}$ in $P$, at least one component of the set $\pi(C_1)\cap U^+$ picks up a half-integer turn: more precisely, there is an interval $[a,b]\subset [0,1]$ such that $C_1(a),C_1(b) \in H$ and such that the angle function $\theta$ of $\pi(C_1)$ satisfies $\theta(b)-\theta(a) =\pm \pi $.  It then follows that at least one of the connected components of the  reflected configuration $\pi(\widetilde{C_1})$ will have a non-zero winding number.
	\end{proof}
	
	We will in fact need the following more general version of Lemma \ref{lemma:linked}, which has an absolutely identical proof:
	
	\begin{lemma}\label{lemma:linkedgeneral}
	Let $C_0\subset \R^3$ be the $z$-axis and let $C_1$ be a curve which is linked with it. Fix a hyperplane $H$ which contains $C_0$ and $k\in \N$.  	We denote by $H_\theta$ the hyperplane containing $C_0$ and meeting $H$ at an angle $\theta$.  Denote by $U^+$, $U^-$ the two infinite wedges between $H$ and $H_{\pi/k}$, cf.\ Figure \ref{fig}.
		
Let $\widetilde C_1$ be the the compact set obtained by successively reflecting $C_1\cap U^+$ on $H_{\pi/k}$, $H_{2\pi/k}$, \dots, $H_{(k-1)\pi/k}$ and then doing a final reflection on $H_\pi=H$.  Precisely, if $R_\theta$ denotes the reflection on $H_\theta$, we set
$$\widetilde C_1 = \widetilde E_1 \cup R_\pi(\widetilde E_1), \qquad 
\widetilde E_1:=(C_1\cap U^+) \cup \left(\bigcup_{j=1}^{k-1} R_{j\pi/k}\circ \dots \circ R_{\pi/k}(C_1 \cap U^+) \right).$$
Then $\widetilde C_1$ is still linked with $C_0$.
	\end{lemma}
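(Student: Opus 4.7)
The plan is to follow the proof of Lemma \ref{lemma:linked} verbatim, with the dihedral configuration of reflecting hyperplanes replacing the single hyperplane. As before, the projection $\pi \colon \R^3 \setminus C_0 \to P$, $(x,y,z) \mapsto (x,y,0)$, is a homotopy equivalence onto the punctured horizontal plane $P = \{(x,y,0) : (x,y) \neq (0,0)\}$, so a closed loop in $\R^3 \setminus C_0$ is nontrivially linked with $C_0$ if and only if its $\pi$-image has nonzero winding number about the origin. It therefore suffices to exhibit a closed loop contained in $\widetilde C_1$ whose projection winds nontrivially around $0$.

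The images under $\pi$ of $H_0, H_{\pi/k}, \ldots, H_{(k-1)\pi/k}$ are $k$ concurrent lines through the origin, partitioning $P$ into $2k$ open sectors $S_0, \ldots, S_{2k-1}$ of angular width $\pi/k$, with $\pi(U^+) = S_0$. Decomposing $\pi(C_1)$ into maximal sub-arcs in the sectors, each sub-arc in $S_j$ has both endpoints on the two bounding rays of $S_j$, and since $S_j$ is simply connected, the angular change of such a sub-arc is $\pm \pi/k$ (transverse case) or $0$ (U-turn case). The first substantive step is to locate a transverse sub-arc specifically in $S_0$. For this I would compute the winding number $n \neq 0$ of $\pi(C_1)$ as the signed crossing count of the ray $\{\theta = \pi/k\}$: the two endpoints of each U-turn at $\pi/k$ contribute canceling signs, while each transverse $S_0$-arc contributes $\pm 1$ according to whether it runs from angle $0$ to angle $\pi/k$ or the reverse. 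Thus the net contribution from the $S_0$-side already equals $n$, and since $n \neq 0$ there exists an arc $\alpha := C_1|_{[a,b]} \subset U^+$ with $C_1(a) \in H_0$, $C_1(b) \in H_{\pi/k}$, and angular change $\pm \pi/k$.

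The rest of the argument is a direct propagation through the reflections. Because $C_1(b) \in H_{\pi/k}$ is fixed by $R_{\pi/k}$, the reflected arc $R_{\pi/k}(\alpha)$ glues continuously to $\alpha$ at $C_1(b)$ and lies in the sector $S_1$, again with angular span $\pi/k$. Iterating with the compositions $T_j = R_{j\pi/k} \circ \cdots \circ R_{\pi/k}$ for $j = 1, \ldots, k-1$ yields a chain $\alpha, T_1(\alpha), \ldots, T_{k-1}(\alpha)$ of $k$ arcs that tile $S_0, \ldots, S_{k-1}$, glue consecutively at the fixed points of the reflections, and sweep total angular change $\pm \pi$ from a point $q \in H$ on the ray $\{\theta = 0\}$ to a point $q' \in H$ on the ray $\{\theta = \pi\}$. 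The final reflection $R_\pi = R$ fixes both $q$ and $q'$ and mirrors the whole chain through the opposite half-plane, producing a chain through the sectors $S_k, \ldots, S_{2k-1}$ also connecting $q$ and $q'$. Concatenating the two chains gives a closed loop contained in $\widetilde C_1$ whose projection has angular change $\pm 2\pi$, hence winding number $\pm 1$ about the origin, and the initial topological reduction concludes the argument. The only step that is genuinely new relative to Lemma \ref{lemma:linked} is the crossing-count that isolates the transverse arc inside $S_0$ rather than some other sector; the remainder is routine bookkeeping of the successive reflections along their shared fixed hyperplanes.
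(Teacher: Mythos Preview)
Your proposal is correct and is exactly the argument the paper intends: it offers no separate proof of this lemma, stating only that the proof is ``absolutely identical'' to that of Lemma \ref{lemma:linked}, and your write-up is precisely that proof carried out with the wedge $U^+$ of angular width $\pi/k$ in place of the half-space. The crossing-count you use to locate a transverse arc in $S_0$ even makes explicit a step the paper glosses over already in the $k=1$ case, and the reflection-propagation to build the winding loop is the routine bookkeeping the paper omits.
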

	
	\begin{figure}
	\centering
	\includegraphics[width=0.6\textwidth]{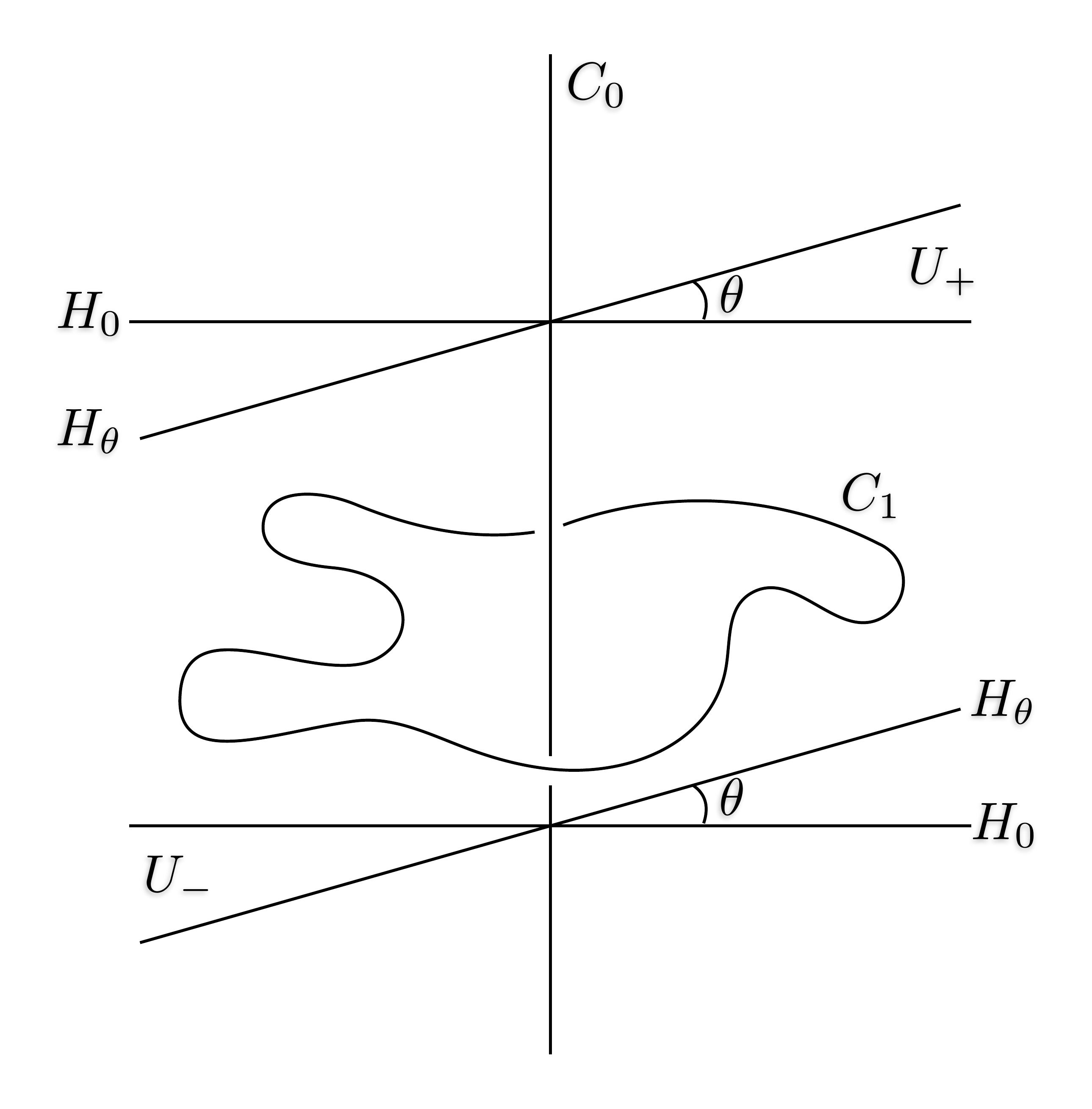}
	\caption{Illustration of the objects in Lemma \ref{lemma:linkedgeneral}}
	\label{fig}
	\end{figure}
	
	Note that we recover Lemma \ref{lemma:linked} from Lemma \ref{lemma:linkedgeneral} by taking $k=1$.	We can now prove that the Hopf link is optimal among all links for which one of the components is a circle.
	
	\begin{proposition}\label{prop:circlecase}
	Let $C_0,C_1\subset \mb S^3$ be two linked curves. If $C_0$ is a circle, then
	$$\Mod_3(\Delta(H_0,H_1))\leq \Mod_3(\Delta(C_0,C_1)).$$
	\end{proposition}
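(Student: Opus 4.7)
The plan is to symmetrize $C_1$ about $C_0$ until it becomes rotationally symmetric, and then recognize the resulting configuration as a Hopf link. First, by M\"obius invariance of the conformal modulus, I apply a conformal map sending $C_0$ to the $z$-axis of $\R^3\cup\{\infty\}\cong \mb S^3$, so I may assume $C_0=\{x=y=0\}$ and $C_1\subset \R^3$ is a curve linked with $C_0$ in the sense of Section \ref{sec:links}.

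For each $k\geq 1$, I produce a dihedrally symmetric competitor $\tilde C_1^{(k)}$ using Lemma \ref{lemma:linkedgeneral}. Let $\rho$ be a near-optimal admissible density for $\Mod_3(\Delta(C_0,C_1))$, and rotate the arrangement of reflection half-planes about $C_0$ so that, by pigeonhole, the reference wedge $U^+$ of opening angle $\pi/k$ satisfies $\int_{U^+}\rho^3 \dVol \leq \frac{1}{2k}\int_{\mb S^3}\rho^3 \dVol$. Define $\tilde\rho$ on $\mb S^3$ by setting $\tilde\rho(x)=\rho(g_j^{-1}(x))$ whenever $x\in g_j(U^+)$, where $g_j$ runs through the $2k$ elements of the dihedral group $D_k$ generated by $R_0,R_{\pi/k},\ldots,R_{(k-1)\pi/k}$. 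Then $\int \tilde\rho^3 \dVol = 2k\int_{U^+}\rho^3 \dVol \leq \int \rho^3 \dVol$. Admissibility of $\tilde\rho$ for $\Delta(C_0,\tilde C_1^{(k)})$ follows by an \emph{unfolding} construction: any locally rectifiable path $\tilde\gamma$ from $C_0$ to $\tilde C_1^{(k)}$ straightens into a continuous path $\gamma'$ in $U^+$ by successively applying the inverse dihedral element on each wedge traversal (the wedge boundaries are fixed pointwise by the adjacent reflections, ensuring continuity at the crossings). The straightened $\gamma'$ connects $C_0$ (invariant under $D_k$) to a point of $C_1\cap U^+\subseteq C_1$, and a change of variables gives $\int_{\tilde\gamma}\tilde\rho \, \d\ms H^1=\int_{\gamma'}\rho \, \d\ms H^1\geq 1$ by admissibility of $\rho$, so $\Mod_3(\Delta(C_0,\tilde C_1^{(k)}))\leq \int\tilde\rho^3 \dVol \leq \int\rho^3 \dVol$.

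Extracting a subsequence along which the chosen reference angles converge to some $\theta_\infty$, the sets $\tilde C_1^{(k)}$ converge in Hausdorff distance to the rotationally symmetric continuum $\tilde C_1^{(\infty)}$ obtained as the rotational closure about $C_0$ of $C_1\cap\{\theta=\theta_\infty\}$. Since $C_1$ has nonzero linking number with $C_0$, it meets every angular slice, so $\tilde C_1^{(\infty)}$ contains at least one horizontal circle $C^*$ concentric with $C_0$; moreover, $(C_0,C^*)$ is M\"obius equivalent to the Hopf link via a $z$-scaling and $z$-translation sending $C^*$ to the unit circle in $\{z=0\}$. Combining monotonicity of modulus under inclusion ($\tilde C_1^{(\infty)}\supseteq C^*$) with lower semicontinuity of modulus under Hausdorff convergence of target continua, and invoking Lemma \ref{lemma:hopf}, yields $\Mod_3(\Delta(H_0,H_1))=\Mod_3(\Delta(C_0,C^*))\leq \Mod_3(\Delta(C_0,\tilde C_1^{(\infty)}))\leq \liminf_k \Mod_3(\Delta(C_0,\tilde C_1^{(k)}))\leq \Mod_3(\Delta(C_0,C_1))$. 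The main obstacle is the limit step: one needs a clean lower-semicontinuity statement for $\Mod_3$ under Hausdorff convergence of the target continuum, together with a verification that the limit is indeed the rotational closure and retains the horizontal circle $C^*$. An alternative that bypasses the limit would be to compare $\tilde C_1^{(k)}$ directly with a horizontal circle at fixed large $k$ via a quantitative perturbation argument, but the limit formulation seems cleanest.
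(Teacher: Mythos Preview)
Your approach matches the paper's almost exactly: reduce $C_0$ to the $z$-axis, symmetrize $C_1$ dihedrally via Lemma \ref{lemma:linkedgeneral} (choosing the good wedge by the same mass-halving pigeonhole and checking admissibility by the same unfolding of paths), pass to the axisymmetric limit---the paper handles this step with the single phrase ``by continuity''---and then use monotonicity to replace the axisymmetric set by a horizontal circle, which is conformally the Hopf link. Your concern about identifying the Hausdorff limit as the rotational closure of a specific angular slice is an unnecessary refinement: all that is needed is that any subsequential Hausdorff limit of the $\tilde C_1^{(k)}$ is axisymmetric, compact, nonempty, and bounded away from $C_0$, and any such set automatically contains a horizontal circle.
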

	
	\begin{proof}
	By applying suitable conformal transformations of $\mb S^3$ followed by the stereographic projection, we can assume that $C_0=\{x=y=0\}\subset \R^3$ is the $z$-axis and that $C_1$ is a curve in $\R^3$ which is linked with $C_0$.  
	
	Our main goal is to show that
	\begin{equation}
	\label{eq:axi-sym}
	\inf \left\{\Mod_3(\Delta(C_0,C)): C \text{ is linked with } C_0 \text{ and axisymmetric} \right\}\leq \Mod_3(\Delta(C_0,C_1)).
	\end{equation}
		Let $H$ be a hyperplane that contains $C_0$ and, as in Lemma \ref{lemma:linked}, let  $U^+,U^-$ be the two half-spaces on either side of $H$.   Instead of proving \eqref{eq:axi-sym}, we will first show the weaker claim that
		\begin{equation}
	\label{eq:one-sym}
	\inf \left\{\Mod_3(\Delta(C_0,C)): C \text{ is linked with } C_0 \text{ and symmetric w.r.t.~$H$} \right\}\leq \Mod_3(\Delta(C_0,C_1)).
	\end{equation}

	To prove \eqref{eq:one-sym},  we let  $\rho \in \mc A(C_0,C_1)$ denote the optimal density for $\Mod_3(\Delta(C_0,C_1))$.  Without loss of generality we can assume that
	\begin{align*}
 \int_{U^+} \rho^3\d x\leq\frac{1}{2}\int_{\R^3} \rho^3 \d x = \frac 1 2 \Mod_3(\Delta(C_0,C_1))
	\end{align*}
	If we consider an even reflection of $\rho$ across $H$ we obtain a new density, which we denote by $\tilde \rho$, such that $\tilde \rho = \rho$ on $U^+$. By Lemma \ref{lemma:linked}, we can reflect $C_1\cap U^+$ to generate a new compact set $\widetilde C_1$ that is symmetric with respect to $H$ and that is still linked with $C_0$. 
By monotonicity of the modulus we might as well assume that $\widetilde C_1$ is connected,  in which case it is also a curve. The function $\tilde \rho$ is admissible for $\Delta(C_0,\widetilde C_1)$: if $\tilde \gamma \in \Delta(C_0,\widetilde C_1)$ then, by reflecting on $H$ the parts of $\tilde\gamma$ lying in $U^-$, we obtain a path $ \gamma \subset U^+$ that connects $C_0$ to $ C_1$, and moreover
	\begin{align*}
	\int_\gamma \tilde \rho \d\ms H^1 = \int_{\tilde \gamma} \rho  \d\ms H^1\ge 1.
	\end{align*}
	In addition, we have
	\begin{equation}
	\label{eq:rhodecreases}
	\int_{\R^3} \tilde \rho^3 \d x = 2 \int_{U^+} \rho^3 \d x \leq \int_{\R^3} \rho^3 \d x 
	\end{equation}
	and hence $\Mod_3(\Delta(C_0,\widetilde C_1))\leq \Mod_3(\Delta(C_0,C_1))$,  proving \eqref{eq:one-sym}.
	
	It is now clear how to generalize the previous argument to prove \eqref{eq:axi-sym}. By continuity it suffices to prove that, for any $k\in \N$,  we have
	$$\inf_C \Mod_3(\Delta(C_0,C))\leq \Mod_3(\Delta(C_0,C_1)),$$
	where the infimum is taken over all curves $C$ linked with $C_0$ and invariant under reflections on all hyperplanes $H=H_0,H_{\pi/k}, \dots, H_{(k-1)\pi/k}$,  where we use the notation of Lemma \ref{lemma:linkedgeneral}. This lemma guarantees that we may essentially repeat the construction of the previous paragraph in this more general case.

	By \eqref{eq:axi-sym} we see that, in order to minimize the path modulus, we may assume that $C_1$ is axisymmetric with respect to $C_0$ and hence that it contains a horizontal circle of the form {$S=\{x^2+y^2=r^2,z=c\}$} for some $r>0, c\in \R$.  By the monotonicity of path modulus we see that 
	\begin{equation}
	\label{eq:boundcircle}
	\Mod_3(\Delta(C_0,S)) \leq \Mod_3(\Delta(C_0,C_1)).
	\end{equation}
	But the pair $(C_0,S)$ is conformally equivalent to the Hopf link $(H_0,H_1)$.  Thus the conclusion follows.
	\end{proof}
		
	We now proceed with the proof of Theorem \ref{thm:introhopf}. Let $T$ be a conformal Clifford torus separating $C_0$ and $C_1$. We denote by $U_0$ and $U_1$ the two connected components of 
	$\mb S^3 \setminus T$ satisfying $C_i\subset U_i$, for $i=0,1$, , according to our hypothesis.
	There exists a M\"obius transformation $\varphi \colon \mb S^3 \to \mb S^3$ that maps $U_0$ onto $U_1$. Indeed, if $T=\mb T$ is the standard Clifford torus, then 
	\begin{equation}
	\label{eq:reflection}
	\varphi_{\mb T}(z_1,z_2) =  (z_2,z_1),
	\end{equation}
	where $\mb S^3$ is considered as a subset of $\mb C^2$.
	In the general case,  since $T$ is a conformal Clifford torus, there is a M\"obius transformation $\psi$ such that $T=\psi(\mb T)$, and we may take $\varphi= \psi\circ \varphi_{\mb T}\circ \psi^{-1}$.

	\begin{lemma}\label{lemma:reflectiononT}
	Let $C_0,C_1\subset \mb S^3$ be  linked curves that are separated by a conformal Clifford torus and let $\varphi$ be as above. Then
	\begin{align*}
	\min\big\{\Mod_3(\Delta(C_0,\varphi( C_0))),\Mod_3(\Delta(C_1,\varphi(C_1)))\big\} \le \Mod_3(\Delta(C_0,C_1)).
	\end{align*}
	\end{lemma}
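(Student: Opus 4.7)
The plan is to exploit the involutive symmetry provided by $\varphi$ in order to construct reflected admissible densities for the two right-hand path families with total energy at most $2\Mod_3(\Delta(C_0,C_1))$; the conclusion then follows from the arithmetic-mean bound on the minimum.

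By conformal invariance of $\Mod_3$, I would first reduce to the case $T=\mb T$ and $\varphi = \varphi_{\mb T}\colon(z_1,z_2)\mapsto(z_2,z_1)$. As a coordinate swap on $\mb C^2 \cong \mb R^4$, this is an orthogonal transformation of $\mb R^4$, and in particular an isometry of $\mb S^3$ that exchanges the two solid tori $U_0$ and $U_1$. Next, let $\rho$ be the optimal density for $\Mod_3(\Delta(C_0,C_1))$ and decompose $\int\rho^3\dVol = E_0 + E_1$ according to the partition $\mb S^3 \setminus \mb T = U_0 \cup U_1$, with $E_i := \int_{U_i}\rho^3\dVol$. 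Define the $\varphi$-reflected densities
\[
\rho_i(x) := \rho(x)\mathbf 1_{U_i}(x) + \rho(\varphi(x))\mathbf 1_{U_{1-i}}(x),\qquad i \in \{0,1\}.
\]
Since $\varphi$ is an isometry, an elementary change of variables yields $\int\rho_i^3\dVol = 2E_i$, and hence $\int\rho_0^3\dVol + \int\rho_1^3\dVol = 2\Mod_3(\Delta(C_0,C_1))$.

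The heart of the argument is to prove that $\rho_i$ is admissible for $\Delta(C_i,\varphi(C_i))$. For $\gamma\in\Delta(C_0,\varphi(C_0))$ crossing $\mb T$ at a point $x_0$, I would decompose $\gamma = \gamma^+ * \gamma^-$ with $\gamma^\pm\subset\overline{U_{0/1}}$ and use the isometric change of variables to obtain
\[
\int_\gamma \rho_0\,\d\mathscr H^1 = \int_{\gamma^+}\rho\,\d\mathscr H^1 + \int_{\varphi(\gamma^-)}\rho\,\d\mathscr H^1,
\]
with both arcs on the right now lying in $\overline{U_0}$ and having one endpoint on $C_0$ and one on $\mb T$. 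Each arc, extended to a minimal $\rho$-length path through $\overline{U_1}$ reaching $C_1$, becomes a competitor in $\Delta(C_0,C_1)$, so admissibility of $\rho$ combined with the identity $d_\rho(x,C_1) = 1 - u(x)$ (where $u$ is the capacity function of $(C_0,C_1)$) yields the lower bound $\int_\gamma \rho_0 \ge u(x_0) + u(\varphi(x_0))$.

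The main obstacle is precisely that this lower bound need not exceed $1$ uniformly on $\mb T$: one may well have $u(x_0) + u(\varphi(x_0)) < 1$ at some points, in which case $\rho_0$ fails to be admissible. The symmetric computation for $\rho_1$ gives $\int_\gamma \rho_1 \ge 2 - u(x_0) - u(\varphi(x_0))$ for $\gamma \in \Delta(C_1,\varphi(C_1))$, which succeeds exactly in the complementary regime. I expect the delicate final step to be a blending of $\rho_0$ and $\rho_1$ according to the sign of $u + u\circ\varphi - 1$ on $\mb T$, or alternatively a passage to the dual formulation via the surface modulus and Theorem~\ref{thm:duality}, where the optimal closed $2$-form for $\Sigma(C_0,C_1)$ decomposes compatibly under $\varphi$-pullback across $\mb T$. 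Once admissibility is secured in one form or another, the estimate
\[
\min\bigl\{\Mod_3(\Delta(C_0,\varphi(C_0))), \Mod_3(\Delta(C_1,\varphi(C_1)))\bigr\} \le \min\{2E_0, 2E_1\} \le E_0 + E_1
\]
closes the proof.
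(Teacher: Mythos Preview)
Your direct path-modulus approach has a genuine gap at exactly the point you flag, and neither of the fixes you sketch closes it.

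The obstruction is structural, not technical. Paths in $\Delta(C_0,\varphi(C_0))$ must cross $\mb T$, and after reflection each of the two pieces lands back in $\overline{U_0}$ as a path from $C_0$ to a point of $\mb T$. For the optimal density $\rho=|\tp d u|$ these pieces are controlled from below only by the values of $u$ at the crossing points, and the sum $u(x_0)+u(\varphi(x_0))$ carries no reason to be at least $1$: if $C_0$ is pushed very close to $\mb T$ while $C_1$ stays far inside $U_1$, then $u$ is uniformly small on $\mb T$ and your lower bound is nowhere near $1$. Worse, even if one of $\rho_0,\rho_1$ happened to be admissible, the conclusion still would not follow: knowing only $\Mod_3(\Delta(C_0,\varphi(C_0)))\le 2E_0$ does not give the minimum bound unless $E_0\le E_1$, and the admissibility of $\rho_1$ requires the complementary inequality $u+u\circ\varphi\le 1$ on $\mb T$. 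So you would need $u+u\circ\varphi\equiv 1$ on $\mb T$, which holds only in the symmetric case $C_1=\varphi(C_0)$. Your proposed ``blending according to the sign of $u+u\circ\varphi-1$'' does not repair this, because the blending would have to be done on all of $\mb S^3$ (the density lives there), while the sign condition lives only on $\mb T$ and moreover paths can cross $\mb T$ at several points where the sign differs.

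The paper avoids this entirely by passing to the dual surface modulus from the outset. The point is that separating hypersurfaces, unlike connecting paths, can be localized to one side of $\mb T$: one shows $\M_{3/2}(\Sigma(C_0,C_1))=\M_{3/2}(\Sigma_0)+\M_{3/2}(\Sigma_1)$ with $\Sigma_i$ the surfaces lying in $\overline{U_i}$, and then proves the exact identity $2\,\M_{3/2}(\Sigma_i)=\M_{3/2}(\Sigma(C_i,\varphi(C_i)))$ by reflecting admissible $2$-forms across $\mb T$ (here the closedness of the optimal form makes homologous surfaces interchangeable, which is what replaces your failed admissibility step). This yields the \emph{equality}
\[
\M_{3/2}(\Sigma(C_0,C_1))=\tfrac12\bigl(\M_{3/2}(\Sigma(C_0,\varphi(C_0)))+\M_{3/2}(\Sigma(C_1,\varphi(C_1)))\bigr),
\]
from which the claimed inequality follows by duality. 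Note that this identity (specifically the relation $2\,\M_{3/2}(\Sigma_i)=\M_{3/2}(\Sigma(C_i,\varphi(C_i)))$) is also used later in the proof of the main theorem, so a weaker inequality would not suffice for the paper's purposes.
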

	
	We refer the reader to \cite[Theorem 4.3.3]{Gehring2017} for a similar result with spheres instead of tori.		
	
	\begin{proof}
	For simplicity we write $\widetilde C_i = \varphi(C_i)$ for $i=0,1$.
	By Theorems \ref{thm:duality} and \ref{thm:modformdensity}, it suffices to prove the dual inequality
	\begin{align}
	\label{eq:goalreflection}
	\max\big\{\M_{3/2}(\Sigma(C_0,\widetilde C_0)),\M_{3/2}(\Sigma(C_1,\widetilde C_1))\big\}\ge \M_{3/2}(\Sigma(C_0,C_1)).
	\end{align}
	The family $\Sigma(C_0,C_1)$ can be decomposed as $ \Sigma_0 \cup \Sigma_1 \cup \Sigma_2$, where 
	\begin{align*}
	\Sigma_0 & = \{\sigma \in \Sigma(C_0,C_1) : \sigma \subset \overline{U_0}\}, \\
	\Sigma_1 & = \{\sigma \in \Sigma(C_0,C_1) : \sigma \subset \overline{U_1}\},\\
	\Sigma_2 & = \Sigma(C_0,C_1)\setminus (\Sigma_0 \cup \Sigma_1).
	\end{align*}
	We now split the proof into two steps.
	
	\textbf{Step 1:} We claim that
	\begin{equation}
	\M_{3/2}(\Sigma(C_0,C_1)) = \M_{3/2}(\Sigma_0\cup \Sigma_1).
	\label{eq:exceptional}
	\end{equation}
	Since $\Sigma_0\cup\Sigma_1 \subset   \Sigma(C_0,C_1)$,  by monotonicity of $\M_{3/2}$ we have 
	$\M_{3/2}(\Sigma_0 \cup \Sigma_1) \le \M_{3/2}(\Sigma(C_0,C_1)).$ 
	
	For the opposite inequality,  suppose that $\omega\in \mc F(\Sigma_0\cup \Sigma_1)$ is admissible.  We will first show that $\omega$ is closed or, equivalently, that 
	$$\int_c \omega = 0$$ for any null-homologous surface $c\subset \mb S^3\exc(C_0\cup C_1)$.  It is not difficult to see that $\tp{d}\omega =0$ in $U_i$ for $i=0,1$: indeed, suppose for the sake of contradiction that $c\subset U_i$ is a null-homologous surface with
		\begin{align*}
	\int_{c} \omega = t \ne 0.
	\end{align*}
	If $\sigma \in \Sigma_i$, then $\sigma + kc$ represents a separating surface in $U_i$ for any $k \in \mb Z$.  Therefore, by choosing $k$ so that $|k|$ is sufficiently large and $k$ and $t$ have opposite signs, we have
	\begin{align*}
	\int_{\sigma + kc} \omega = \int_{\sigma}\omega + kt < 1.
	\end{align*}
	This contradicts the assumption that $\omega$ is admissible for $\Sigma_i$. Hence $\tp d \omega=0$ in $U_i$ and it remains to show that $\tp d \omega =0$ on $T$. Pick an arbitrary point $x\in T$ and consider a sphere $S= \p B(x)$ around $x$, where $B(x)$ is a ball with center $x$.  We write
	$$S=A_0\cup A_1, \qquad A_i = (S\cap U_0)\cup (T\cap B(x))$$
	and note that the pieces $T\cap B(x)$ have opposite orientations in $A_0$ and $A_1$. It is now easy to see that
	$$\int_S \omega = \int_{A_0} \omega + \int_{A_1} \omega = 0,$$
	since $\tp{d} \omega =0$ in $U_0\cup U_1$ and the two integrals over $T$ cancel with each other.
	
		
Since $\omega$ is closed, it has the same integral over homologous surfaces.  Since $\mb S^3\exc (C_0\cup C_1)$ deformation retracts to $U_0$,  any separating surface $\sigma \in \Sigma(C_0,C_1)$ is homologous to a separating surface $\sigma_0\subset U_0$, and so
$$\int_\sigma \omega = \int_{\sigma_0} \omega \geq 1,$$
by assumption. Therefore, the form $\omega$ is admissible for $\Sigma(C_0,C_1)$, which implies that 
	$$\M_{3/2}(\Sigma(C_0,C_1)) \ge \M_{3/2}(\Sigma_0\cup \Sigma_1).$$ This proves the claim \eqref{eq:exceptional}.
	
	As the families $\Sigma_0$ and $\Sigma_1$ are separate,  in the sense that any surface in $\Sigma_0$ is contained in $U_0$ and any surface in $\Sigma_1$ is contained in $U_1$,  applying Lemma \ref{lemma:mprop} to \eqref{eq:exceptional} yields
	\begin{align}\label{eq:modsplit}
	\M_{3/2}(\Sigma(C_0,C_1)) = \M_{3/2}(\Sigma_0) + \M_{3/2}(\Sigma_1).
	\end{align}
	
	\textbf{Step 2:} We next would like to show that 
	\begin{align}\label{eq:modsymmetry}
	2 \M_{3/2}(\Sigma_i) =  \M_{3/2}(\Sigma(C_i,\widetilde C_i)).
	\end{align}
	for $i  =0,1$.  Without loss of generality, we choose $i=0$.  
	
	Let $\omega$ be admissible for $\Sigma_0$.  We can define $\tilde \omega$ as $\omega$ on $U_0$ and $\varphi^*\omega$ on $U_1$, and we claim that $\tilde{\omega}$ is admissible for $\Sigma(C_0,\widetilde C_0)$. To see this, fix $\sigma \in \Sigma(C_0,\widetilde C_0)$.   As before, we may decompose $\sigma$ so that $\sigma = \sigma_1 + \sigma_2$, where $\sigma_1 \in \Sigma_0 \cup N_0$ and $\sigma_2 \in \varphi(\Sigma_0 \cup N_0)$; equivalently, we have $\varphi(\sigma_2) \in \Sigma_0 \cup N_0$.  We have that
	\begin{align*}
	\int_{\sigma_1} \tilde{\omega} = \int_{\sigma_1} \omega \quad \text{and} \quad \int_{\sigma_2}\tilde{\omega} = \int_{\varphi(\sigma_2)} \omega.
	\end{align*}
	For the same reason as in Step 1, the integral of $\omega$ on any null-homologous set is $0$ and at least one of $\sigma_1$ and $\varphi(\sigma_2)$ is not null-homologous.  So
	\begin{align*}
	\int_{\sigma} \tilde{\omega} = \int_{\sigma_1} \omega  + \int_{\varphi(\sigma_2)} \omega \ge 1, 
	\end{align*}
	as wished. 
	Additionally,
	\begin{align*}
	\int_{\mb S^3} |\tilde{\omega}|^{3/2} &= 2 \int_{U_0} |\omega|^{3/2} \le 2 \int_{\mb S^3} |\omega|^{3/2},
	\end{align*}
	where in the first equality we used the conformal invariance of the $L^{3/2}$-norm for $2$-forms.  Hence 
	$$2\M_{3/2}(\Sigma_0) \ge \M_{3/2}(\Sigma(C_0,\widetilde C_0)).$$
	
	For the opposite inequality, let $\omega$ be an admissible form for $\Sigma(C_0,\widetilde C_0)$.  We define a new form $\hat\omega = \frac1 2 (\omega + \varphi^*\omega)$ on $U_0$ and $0$ outside $U_0$.  The form $\hat \omega$ is admissible for $\Sigma_0$: given $\sigma \in \Sigma_0$, we have that
	\begin{align*}
	\int_\sigma \hat\omega = \frac{1}{2}\Big(\int_\sigma \omega + \int_{\varphi(\sigma)} \omega\Big) \ge 1.
	\end{align*}
	Additionally, by convexity, 
	\begin{align*}
	\int_{\mb S^3} |\hat \omega|^{3/2} &= \int_{U_0} \abs{\frac{\omega + \varphi^*\omega}{2}}^{3/2} 
	\le \frac{1}{2}\Big( \int_{U_0} |\omega|^{3/2} +\int_{\varphi(U_0)} |\omega|^{3/2} \Big)
	= \frac{1}{2} \int_{\mb S^3} |\omega|^{3/2}.
	\end{align*}
	Therefore we have $\M_{3/2}(\Sigma_0) \le 2 \M_{3/2}(\Sigma(C_0,\widetilde C_0))$ and \eqref{eq:modsymmetry} holds.
	
	Finally, combining \eqref{eq:modsplit} and \eqref{eq:modsymmetry}, we have the identity
	\begin{align*}
	\M_{3/2}(\Sigma(C_0,C_1)) = \frac{\M_{3/2}(\Sigma(C_0,\widetilde C_0)) + \M_{3/2}(\Sigma(C_1,\widetilde C_1))}{2}.
	\end{align*}
	This implies \eqref{eq:goalreflection} and hence the claim of the lemma.
	\end{proof}
	
	\begin{proof}[Proof of Theorem \ref{thm:introhopf}]
	By Lemma \ref{lemma:reflectiononT},  without loss of generality we can assume that $C_1 =\varphi(C_0)$, where $\varphi$ is as in \eqref{eq:reflection}; note that $C_0$ and $\varphi(C_0)$ are linked.  By \eqref{eq:modsymmetry} and Theorems \ref{thm:duality} and \ref{thm:modformdensity},  it follows that
\begin{equation}
\label{eq:halfpicture}
\Mod_3(\Delta(C_0,C_1))=\frac 1 2 \Mod_3(\Delta(C_1,\mb T)).
\end{equation}	
We now apply a symmetrization argument essentially identical to the one from the proof of Proposition \ref{prop:circlecase}.  
	
	More precisely,  after applying a stereographic projection, $\mb T$ is mapped to the torus $\widetilde {\mb T}$ defined in \eqref{eq:stereocliff}.
	In particular, $\widetilde{\mb T}$ is  symmetric with respect to reflections along any hyperplane $H$ that contains the $z$-axis,  and $C_1$ is linked with the $z$-axis, since $C_1$ is linked with $C_0$ and $\mb S^3\backslash C_1$ deformation retracts to $\mb S^1$.  Let $\rho \in \mc A(C_1,\mb T)$  be the optimal density and let $U^+, U^-$ be the two half-spaces on either side of $H$.  We can assume that
	\begin{align*}
	\int_{U^+} \rho^3\d x\leq \frac{1}{2}  \int_{\R^3} \rho^3 \d x .
	\end{align*}
	If we define a new function $\tilde \rho$ through an even reflection of $\rho$ across $H$,  so that $\tilde \rho = \rho$ in $U^+$,  then
	\begin{align}
	 \int_{\R^3} \tilde \rho^3 \d x \le \int_{\R^3} \rho^3 \d x=\Mod_3(\Delta(C_1,\mb T)).
	 \label{eq:lessdensity}
	\end{align}
	We can also reflect $C_1 \cap U^+$ across $H$ to generate a new continua $\widetilde C_1$ which is non-empty, since $C_1$ is linked with the $z$-axis.  We claim that the function $\tilde \rho$ is admissible for $\Delta(\widetilde C_1,\mb T)$.  Indeed, if $\gamma \in \Delta(\widetilde C_1,\mb T)$ then,  by reflecting the parts of $\gamma$ not in $U^+$, we construct a new path $\tilde \gamma \subset U^+$ that connects $C_1$ to $\mb T$, since $\mb T$ itself is invariant under reflections along $H$.  The admissibility of $\tilde \rho$ now follows because
	\begin{align*}
	\int_\gamma \tilde \rho \d \ms H^1= \int_{\tilde \gamma} \rho \d \ms H^1\ge 1.
	\end{align*}
	It then follows from \eqref{eq:lessdensity} that
	$$\Mod_3(\Delta(\widetilde C_1,\mb T)) \leq \Mod_3(\Delta(C_1,\mb T)).$$
	
Arguing exactly as in the proof of Proposition \ref{prop:circlecase} we see that, to find a lower bound for the connecting modulus $\Mod_3(\Delta(C_1,\mb T))$,  we may assume that $C_1$ is symmetric with respect to all hyperplanes containing the $z$-axis; therefore,  we may assume that $C_1$ contains a horizontal circle $S$.  By the monotonicity of the path modulus, we have 
$$\Mod_3(\Delta(S,\mb T))\leq \Mod_3(\Delta(C_1,\mb T)).$$	
Hence, recalling \eqref{eq:halfpicture}, we have have shown that
$$\Mod_3(\Delta(S, \varphi(S))) = \frac 1 2 \Mod_3(\Delta(S,\mb T)) 
\leq \frac 1 2 \Mod_3(\Delta(C_1,\mb T)) = \Mod_3(\Delta(C_0,C_1)).$$
Note that the curves $S$ and $\varphi(S)$ are linked.  Thus,  applying Proposition \ref{prop:circlecase}, we see that an optimal lower bound is provided by the Hopf link.
	\end{proof}

	\let\oldthebibliography\thebibliography
	\let\endoldthebibliography\endthebibliography
	\renewenvironment{thebibliography}[1]{
	\begin{oldthebibliography}{#1}
	\setlength{\itemsep}{0.5pt}
	\setlength{\parskip}{0.5pt}
	}
	{
	\end{oldthebibliography}
	}
	
	{\small
	\bibliographystyle{abbrv-andre}
	\bibliography{/Users/adinis/Documents/Projects/library.bib}
	}

\end{document}